\numberwithin{equation}{section}
\newcommand{\Z}{\mathbb{Z}}
\newcommand{\Q}{\mathbb{Q}}
\newcommand{\OO}{\mathcal{O}}
\newcommand{\CL}{\mathrm{Cl}}
\newcommand\ZZP{\mathbb{Z}[\sqrt{-2p}]}
\newcommand\QQP{\mathbb{Q}(\sqrt{-2p})}
\newcommand\FF{\mathbb{F}}
\newcommand\MM{\mathbb{M}}
\newcommand\DD{\mathcal{D}}
\newcommand\aaa{\mathfrak{a}}
\newcommand\mm{\mathfrak{m}}
\newcommand\nn{\mathfrak{n}}
\newcommand\pp{\mathfrak{p}}
\newcommand\qq{\mathfrak{q}}
\newcommand\rk{\mathrm{rk}}
\newcommand\Norm{\mathrm{N}}
\newcommand\rat{\mathrm{r}}
\newcommand\ve{\varepsilon}
\newtheorem{theorem}{Theorem}
\newtheorem{lemma}{Lemma}[section]
\newtheorem{prop}[lemma]{Proposition}
\newtheorem{corollary}[theorem]{Corollary}
\title{\vspace{-\baselineskip}\sffamily\bfseries On the $16$-rank of class groups of $\Q(\sqrt{-2p})$ for primes $p \equiv 1 \bmod 4$}
\author{Peter Koymans, Djordjo Milovic
\\{\tt p.h.koymans@math.leidenuniv.nl}
\\{\tt dmilovic@math.ias.edu}}
\date{\today}
\begin{document}
\maketitle

\begin{abstract}
We use Vinogradov's method to prove equidistribution of a spin symbol governing the $16$-rank of class groups of quadratic number fields $\QQP$, where $p \equiv 1 \bmod 4$ is a prime.
\end{abstract}

\section{Introduction}
Recently, the authors have used Vinogradov's method to prove density results about elements of order $16$ in class groups in certain \textit{thin} families of quadratic number fields parametrized by a single prime number, namely the families $\{\Q(\sqrt{-2p})\}_{p\equiv -1\bmod 4}$ and $\{\Q(\sqrt{-p})\}_{p}$ \cite{Milovic2, KM1}. In this paper, we establish a density result for the family $\{\Q(\sqrt{-2p})\}_{p\equiv 1\bmod 4}$, thereby completing the picture for the $16$-rank in thin families of imaginary quadratic fields of even discriminant. Although our overarching methods are similar to those originally developed in the work of Friedlander et al.\ \cite{FIMR}, the technical difficulties in the present case are different and require a more careful study of the spin symbols governing the $16$-rank. The main distinguishing feature of the present work is that this careful study allows us to avoid relying on a conjecture about short character sums appearing in \cite{FIMR, KM1}, thus making our results unconditional.

More generally, given a sequence of complex numbers $\{a_n\}_{n}$ indexed by natural numbers, a problem of interest in analytic number theory is to prove an asymptotic formula for the sum over primes 
$$
S(X):=\sum_{\substack{p\text{ prime} \\ p\leq X}}a_p
$$
as $X\rightarrow \infty$. Many sequences $\{a_n\}_n$ admit asymptotic formulas for $S(X)$ via various generalizations of the Prime Number Theorem, with essentially the best known error terms coming from ideas of de la Val\'{e}e Poussin already in 1899 \cite{LVP}. In 1947, Vinogradov \cite{Vino1, Vino2} invented another method to treat certain sequences which could not be handled with a variant of the Prime Number Theorem. His method has since been clarified and made easier to apply, most notably by Vaughan \cite{Vaughan} and, for applications relating to more general number fields, by Friedlander et al.\ \cite{FIMR}. Nonetheless, there is a relative paucity of interesting sequences $\{a_n\}_n$ that admit an asymptotic formula for $S(X)$ via Vinogradov's method. The purpose of this paper is to present yet another such sequence, of a similar nature as those appearing in \cite{FIMR, KM1}; similarly as in \cite{KM1}, the asymptotics we obtain have implications in the arithmetic statistics of class groups of number fields.

Let $p\equiv 1\bmod 4$ be a prime number, and let $\CL(-8p)$ denote the class group of the quadratic number field $\QQP$ of discriminant $-8p$. The finite abelian group $\CL(-8p)$ measures the failure of unique factorization in the ring $\ZZP$. By Gauss's genus theory \cite{Gauss}, the $2$-part of $\CL(-8p)$ is cyclic and non-trivial, and hence determined by the largest power of $2$ dividing the order of $\CL(-8p)$. For each integer $k\geq 1$, we define a density $\delta(2^k)$, if it exists, as
$$
\delta(2^k) := \lim_{X\rightarrow\infty}\frac{\#\{p\leq X:\ p\equiv 1\bmod 4,\ 2^k|\#\CL(-8p)\}}{\#\{p\leq X:\ p\equiv 1\bmod 4\}}.
$$
As stated above, the $2$-part of $\CL(-8p)$ is cyclic and non-trivial, so $\delta(2) = 1$. It follows from the Chebotarev Density Theorem (a generalization of the Prime Number Theorem) that $\delta(4) = \frac{1}{2}$ and $\delta(8) = \frac{1}{4}$; indeed, R\'{e}dei \cite{Redei} proved that $4|\#\CL(-8p)$ if and only if $p$ splits completely in $\Q(\zeta_8)$, and Stevenhagen \cite{Ste2} proved that $8|\#\CL(-8p)$ if and only if $p$ splits completely in $\Q(\zeta_8, \sqrt[4]{2})$, where $\zeta_8$ denotes a primitive $8$th root of unity. The qualitative behavior of divisibility by $16$ departs from that of divisibility by lower 2-powers in that it can no longer be proved by a simple application of the Chebotarev Density Theorem. We instead use Vinogradov's method to prove    

\begin{theorem}\label{mainThm}
For a prime number $p\equiv 1\bmod 4$, let $e_p = 0$ if $\CL(-8p)$ does not have an element of order~$8$, let $e_p = 1$ if $\CL(-8p)$ has an element of order~$16$, and let $e_p = -1$ otherwise. Then for all $X > 0$, we have
$$
\sum_{\substack{p\leq X \\ p\equiv 1\bmod 4}}e_p\ll X^{1-\frac{1}{3200}},
$$
where the implied constant is absolute. In particular, $\delta(16) = \frac{1}{8}$.
\end{theorem}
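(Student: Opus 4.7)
The plan is to apply Vinogradov's method in the form developed in \cite{FIMR} and refined in \cite{KM1}, with the key novelty being an algebraic reformulation of the spin symbol that bypasses the conditional short character sum bound used in those works.

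The first task is to give an algebraic description of $e_p$. By Stevenhagen's criterion, $8 \mid \#\CL(-8p)$ precisely when $p$ splits completely in $K:=\Q(\zeta_8,\sqrt[4]{2})$. For such $p$ one factors $p = \pi\opi$ in $\ZZT$, with $\pi$ chosen in a normalized residue class modulo units, and attaches to $\pi$ a spin symbol $[\pi] \in \{\pm 1\}$ of quadratic-residue type that detects whether the unique element of order $8$ in $\CL(-8p)$ is a square. One verifies that $e_p = [\pi]$ when $p$ splits in $K$ and $e_p = 0$ otherwise, and that $[\,\cdot\,]$ extends multiplicatively to odd $\nu \in \ZZT$. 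The problem then reduces to exhibiting cancellation in $\sum_{\pi}[\pi]$, summed over primes $\pi \in \ZZT$ in appropriate residue classes with $\Norm(\pi) \leq X$.

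To this end I would apply Vaughan's identity in $\QQT$, following \cite{FIMR}, to decompose the prime sum into Type I sums $\sum_{\mu, \nu}\alpha_\mu[\mu\nu]$ with $\Norm(\mu) \leq M$, and Type II sums $\sum_{\mu,\nu}\alpha_\mu\beta_\nu [\mu\nu]$ with $M \leq \Norm(\mu) \leq X/M$. The Type I bound follows by reciprocity: for fixed $\mu$ the function $\nu\mapsto[\mu\nu]$ is, up to an explicit factor, a character of modulus dividing $\mu$, so P\'olya--Vinogradov-type estimates in $\ZZT$ yield a power saving provided $M$ is not too large.

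The main obstacle is the Type II bound. After Cauchy--Schwarz one must bound $\sum_{\nu}[\mu_1\nu][\mu_2\nu]$ for generic $\mu_1,\mu_2 \in \ZZT$. In \cite{FIMR, KM1} the algebra only reduced this to a sum whose cancellation was proved modulo an unproven conjecture on short character sums. The advertised novelty here, specific to $p \equiv 1 \bmod 4$, is a finer manipulation of the spin symbol that rewrites $[\mu_1\nu][\mu_2\nu]$ as an honest Jacobi symbol in $\nu$ of modulus controlled by $\mu_1\mu_2$, times a factor that is essentially constant on small boxes; Weil's bound for complete character sums together with a completion argument then yields the required cancellation unconditionally. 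Balancing the Type I and II contributions and optimizing $M$ gives the exponent $1 - \tfrac{1}{3200}$. The density statement $\delta(16) = \tfrac{1}{8}$ is then immediate: the set $\{p \equiv 1 \bmod 4 : e_p \neq 0\}$ has density $\delta(8) = \tfrac{1}{4}$ by Chebotarev, and the cancellation in $\sum_p e_p$ forces the signs $\pm 1$ to be equidistributed inside that set, so the subset with $e_p = +1$ (equivalently $16 \mid \#\CL(-8p)$) has density $\tfrac{1}{2}\cdot\tfrac{1}{4} = \tfrac{1}{8}$.
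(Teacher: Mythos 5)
Your proposal does not match the paper's argument in several essential respects, and as written it would not succeed.

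First, the arithmetic setup is off. You propose to work with a quadratic-residue--type spin over $\Z[\sqrt{2}]$. The criterion for $16\mid h(-8p)$ is inherently a \emph{quartic} residue condition $\left(\frac{u}{p}\right)_4 = 1$ (Leonard--Williams), and the natural home for the symbol $[w] = \left(\frac{\rat(w\tau(w))}{w}\right)_4$ is the degree-$4$ field $M = \Q(\zeta_8)$, which contains $i$ (needed for quartic residues) and $\sqrt{2}$ (needed to define $u,v$). A symbol taking values only in $\{\pm 1\}$ over $\Z[\sqrt{2}]$ does not detect the $16$-rank. Relatedly, you assert $[\,\cdot\,]$ extends \emph{multiplicatively} to odd $\nu$; it does not. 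The identity that drives the whole argument is the \emph{twisted} multiplicativity $[wz] = \mu_3\,[w][z]\left(\frac{z}{\sigma(w)}\right)_2$, and it is precisely the extra $\left(\frac{z}{\sigma(w)}\right)_2$ factor that creates the oscillation one can then estimate. If $[\,\cdot\,]$ were a genuine character, Vinogradov's sieve would have nothing to exploit.

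Second, you locate the conditional step of \cite{FIMR, KM1} in the Type II sums and propose to circumvent it there. This is misplaced. The Type II (bilinear) estimate in this family goes through by the standard route: use twisted multiplicativity to strip off $[w]$ and $[z]$ into the bounded coefficients, apply H\"older and Cauchy--Schwarz, and then use the elementary fact that a nontrivial quadratic character summed over a complete residue system vanishes unless $\Norm(w_1 w_2)$ is a perfect square; no Weil bound and no new idea are needed. The conjecture on short character sums in \cite{FIMR} arises in the \emph{Type I} (linear) estimate, where for fixed $\beta$ one must bound $\sum_a \left(\frac{a+\beta}{c\OO_M}\right)_{M,4}\left(\frac{a+\beta}{c'\OO_M}\right)_{M,4}$. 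The paper's actual innovation is a ``field-lowering'' package (Lemmas~\ref{lAlg1}--\ref{lAlg3}) showing, after choosing $c'\in\Z[\sqrt{-2}]$ and $c\in\Z[i]$, that the $c'$-symbol is trivial (equal to $1$ off the gcd support) and the $c$-symbol reduces to a \emph{real} primitive Dirichlet character of squarefree modulus $q = \Norm_{\Q(i)/\Q}(\qq)$. Only then can Burgess's bound for short real character sums be applied unconditionally. Your Type I plan -- ``reciprocity and P\'olya--Vinogradov in $\Z[\sqrt 2]$'' -- skips this entirely; a P\'olya--Vinogradov bound is too weak (it requires the conductor to be smaller than the square of the interval length, which fails here), and without the field-lowering you are left with a complex character sum of exactly the type for which \cite{FIMR} had to assume a conjecture.

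In short: the correct field is $\Q(\zeta_8)$, not $\Q(\sqrt 2)$; the symbol is quartic and only twisted-multiplicative; the conjecture lives in Type I, not Type II; and the unconditional resolution comes from reducing quartic symbols to real quadratic characters so that Burgess applies, not from Weil bounds on bilinear sums. The last sentence of your proposal (deducing $\delta(16) = 1/8$ from the power-saving bound plus $\delta(8) = 1/4$) is fine.
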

In combination with \cite{Milovic2}, we get
\begin{corollary}\label{mainCor2}
For a prime number $p$, let $h_2(-2p)$ denote the cardinality of the $2$-part of the class group $\CL(-8p)$. For an integer $k\geq 0$, let $\delta'(2^k)$ denote the natural density (in the set of all primes) of primes $p$ such that $h_2(-2p) = 2^k$, if it exists. Then $\delta'(1) = 0$, $\delta'(2) =\frac{1}{2}$, $\delta'(4) =\frac{1}{4}$, and $\delta'(8) =\frac{1}{8}$.  
\end{corollary}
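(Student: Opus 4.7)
The plan is to obtain Corollary \ref{mainCor2} as a formal consequence of Theorem \ref{mainThm}, of its $p\equiv 3\bmod 4$ analogue in \cite{Milovic2}, and of the older Chebotarev-type results of R\'edei and Stevenhagen recalled in the introduction. Any single prime (such as $p=2$) contributes density $0$, so I may restrict from the outset to odd primes. For every odd prime $p$, Gauss's genus theory tells me that the $2$-part of $\CL(-8p)$ is cyclic and nontrivial, so $h_2(-2p)\geq 2$ always. This instantly yields $\delta'(1)=0$.

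For $k\geq 1$ and $r\in\{1,3\}$, write $\delta(2^k\mid r)$ for the density, inside the set of primes $p\equiv r\bmod 4$, of those $p$ with $2^k\mid \#\CL(-8p)$. I would next compile the following table of values. Within $r=1$, the introduction quotes $\delta(4\mid 1)=\tfrac{1}{2}$ and $\delta(8\mid 1)=\tfrac{1}{4}$ from R\'edei and Stevenhagen, while Theorem \ref{mainThm} of this paper supplies $\delta(16\mid 1)=\tfrac{1}{8}$. Within $r=3$, the analogous statements for divisibility by $4$ and $8$ are again Chebotarev consequences of the appropriate splitting conditions in abelian extensions of $\QQP$, and the statement $\delta(16\mid 3)=\tfrac{1}{8}$ is the main theorem of \cite{Milovic2}. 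Thus in both residue classes one has $\delta(2^k\mid r)=2^{-(k-1)}$ for $k=1,2,3,4$, with $\delta(1\mid r)=1$ trivially.

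Since the cyclic $2$-part of $\CL(-8p)$ has exactly $2^k$ elements if and only if it is divisible by $2^k$ but not by $2^{k+1}$, within each residue class $r$ and for each $k\in\{1,2,3\}$ the density of primes with $h_2(-2p)=2^k$ equals
$$
\delta(2^k\mid r)-\delta(2^{k+1}\mid r) = \frac{1}{2^{k-1}}-\frac{1}{2^k} = \frac{1}{2^k}.
$$
Dirichlet's theorem on primes in arithmetic progressions says that each residue class $r\in\{1,3\}$ carries half the primes asymptotically, so the weighted average of these two equal sub-densities is again $1/2^k$, which gives $\delta'(2)=\tfrac{1}{2}$, $\delta'(4)=\tfrac{1}{4}$, and $\delta'(8)=\tfrac{1}{8}$. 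The only substantive input needed is Theorem \ref{mainThm} together with its sibling in \cite{Milovic2}; everything else is formal bookkeeping, and the real obstacle in the project is the proof of Theorem \ref{mainThm} itself, not the passage from it to this corollary.
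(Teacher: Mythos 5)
Your proposal is correct and takes essentially the same route as the paper: the paper states Corollary~\ref{mainCor2} as an immediate consequence of combining Theorem~\ref{mainThm} with the main theorem of \cite{Milovic2} (which handles $p\equiv 3\bmod 4$) and the R\'edei/Stevenhagen Chebotarev densities for divisibility by $4$ and $8$, and you have simply spelled out the telescoping $\delta(2^k)-\delta(2^{k+1})$ bookkeeping and the averaging over the two residue classes modulo $4$ that the paper leaves implicit.
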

The power-saving bound in Theorem~\ref{mainThm}, similarly to the main results in \cite{Milovic2} and \cite{KM1}, is another piece of evidence that \textit{governing fields} for the $16$-rank do \textit{not} exist. For a sampling on previous work about governing fields, see \cite{CohnLag}, \cite{CohnLag2}, \cite{Morton}, and \cite{Ste1}.
  
The strategy to prove Theorem~\ref{mainThm} is to construct a sequence $\{a_n\}_n$ which simultaneously carries arithmetic information about divisibility by $16$ when $n$ is a prime number congruent to $1$ modulo $4$ and is conducive to Vinogradov's method. On one hand, the criterion for divisibility by $16$ cannot be stated naturally over the rational numbers $\Q$. For instance, even the criterion for divisibility by $8$ is most naturally stated over a field of degree $8$ over $\Q$. On the other hand, proving analytic estimates in a number field generally becomes more difficult as the degree of the number field increases, as exemplified by the reliance on a conjecture on short character sums in \cite{FIMR}. We manage to work over $\Q(\zeta_8)$, a field of degree $4$. Although the methods of Friedlander et al.\ \cite{FIMR} narrowly miss the mark of being unconditional for number fields of degree $4$, we manage to exploit the arithmetic structure of our sequence to ensure that Theorem~\ref{mainThm} is unconditional.     

Lastly, for work concerning the average behavior of the $2$-parts of class groups of quadratic number fields in families that are \textit{not} thin, i.e., for which the average number of primes dividing the discriminant grows as the discriminant grows, we point the reader to the extensive work of Fouvry and Kl\"{u}ners \cite{FK1, FK2, FK3, FK4} on the $4$-rank and certain cases of the $8$-rank and more recently to the work of Smith on the $8$- and higher $2$-power-ranks \cite{Smith1, Smith2}. While Smith's methods in \cite{Smith2} appear to be very powerful, the authors believe that they are unlikely to be applicable to thin families of the type appearing in this paper.   

\subsection*{Acknowledgements}
The authors would like to thank Jan-Hendrik Evertse and Carlo Pagano for useful discussions. The second author was supported by National Science Foundation agreement No.\ DMS-1128155. 

\section{Encoding the $16$-rank of $\CL(-8p)$ into sequences $\{a_{\nn}\}_{\nn}$}
Given an integer $k\geq 1$, the $2^k$-rank of a finite abelian group $G$, denoted by $\rk_{2^k}G$, is defined as the dimension of the $\FF_2$-vector space $2^{k-1}G/2^kG$. If the $2$-part of $G$ is cyclic, then $\rk_{2^k}G\in\{0, 1\}$, and $\rk_{2^k}G = 1$ if and only if $2^k|\#G$. The order of a class group is called the class number, and we denote the class number of $\CL(-8p)$ by $h(-8p)$.

The criterion for divisibility of $h(-8p)$ by $16$ that we will use is due to Leonard and Williams \cite[Theorem 2, p.\ 204]{LW82}. Given a prime number $p\equiv 1\bmod 8$ (so that $4|h(-8p)$), there exist integers $u$ and $v$ such that
\begin{equation}\label{puv}
p = u^2-2v^2, \ \ \ \ \ u>0.
\end{equation}
The integers $u$ and $v$ are \textit{not} uniquely determined by $p$; nevertheless, if $(u_0, v_0)$ is one such pair, then, every such pair $(u, v)$ is of the form $u+v\sqrt{2} = \ve^{2m}(u_0\pm v_0\sqrt{2})$ for some $m\in\Z$, where $\ve = 1+\sqrt{2}$. The criterion for divisibility by $8$ can be restated in terms of a quadratic residue symbol; one has 
$$
8|h(-8p)\Longleftrightarrow \left(\frac{u}{p}\right)_2 = 1.
$$
Note that $1 = (u/p)_2 = (p/u)_2 = (-2/u)_2$, so that $8|h(-8p)$ if and only if $u\equiv 1, 3\bmod 8$. As $\ve^2(u+v\sqrt{2}) = (3u+4v) + (2u+3v)\sqrt{2}$ and $v$ is even, we can always choose $u$ and $v$ in \eqref{puv} so that $u\equiv 1\bmod 8$. The criterion for divisibility of $h(-8p)$ by $16$ states that if $u$ and $v$ are integers satisfying \eqref{puv} and $u \equiv 1\bmod 8$, then
$$
16|h(-8p)\Longleftrightarrow \left(\frac{u}{p}\right)_4 = 1,
$$ 
where $(u/p)_4$ is equal to $1$ or $-1$ depending on whether or not $u$ is a fourth power modulo~$p$. To take advantage of the multiplicative properties of the fourth-power residue symbol, one has to work over a field containing $i = \sqrt{-1}$, a primitive fourth root of unity. Since $u$ is naturally defined via the splitting of $p$ in $\Q(\sqrt{2})$, we see that the natural setting for the criterion above is the number field
$$
M := \Q(\sqrt{2}, i) = \Q(\zeta_8),
$$
of degree $4$ over $\Q$. It is straightforward to check that the class number of $M$ and each of its subfields is $1$, that $2$ is totally ramified in $M$, and that the unit group of its ring of integers $\OO_M = \Z[\zeta_8]$ is generated by $\zeta_8$ and $\ve = 1 + \sqrt{2}$. Note that $M/\Q$ is a normal extension with Galois group isomorphic to the Klein four group, say $\{1, \sigma, \tau, \sigma\tau\}$, where $\sigma$ fixes $\Q(i)$ and $\tau$ fixes $\Q(\sqrt{2})$.
\begin{center}
\begin{tikzpicture}
  \draw (0, 0) node[]{$\Q$};
  \draw (0, 2) node[]{$\Q(i\sqrt{2})$};
	\draw (-2, 2) node[]{$\Q(i)$};
	\draw (2, 2) node[]{$\Q(\sqrt{2})$};
  \draw (0, 4) node[]{$\Q(\zeta_8)$};
	\draw (0, 0.3) -- (0, 1.7);
	\draw (0, 2.3) -- (0, 3.7);
	\draw (0.3, 0.3) -- (1.7, 1.7);
	\draw (-0.3, 0.3) -- (-1.7, 1.7);
	\draw (1.7, 2.3) -- (0.3, 3.7);
	\draw (-1.7, 2.3) -- (-0.3, 3.7);
	\draw (1.3, 3.2) node[]{$\left\langle \tau\right\rangle$};
	\draw (0.4, 2.8) node[]{$\left\langle \sigma\tau \right\rangle$};
	\draw (-1.3, 3.2) node[]{$\left\langle \sigma\right\rangle$};
\end{tikzpicture}
\end{center} 
Let $p\equiv 1\bmod 8$ be a prime, so that $p$ splits completely in $M$. Then there exists $w\in\OO_M$ such that $\Norm(w) = p$, i.e., such that $p = w\sigma(w)\tau(w)\sigma\tau(w)$. Note that the inclusion $\Z\hookrightarrow \OO_M$ induces an isomorphism $\Z/(p)\cong\OO_M/(w)$, so that an integer $n$ is a fourth power modulo $p$ exactly when it is a fourth power modulo $w$. As $w\tau(w)\in \Z[\sqrt{2}]$, there exist integers $u$ and $v$ such that $w\tau(w) = u+v\sqrt{2}$. Then $u = (w\tau(w) + \sigma(w)\sigma\tau(w))/2$. With this in mind, we define, for any $\alpha\in\Z[\sqrt{2}]$,
$$
\rat(\alpha) = \frac{1}{2}\left(\alpha + \sigma(\alpha)\right)
$$
and, for \textit{any} \textit{odd} (i.e., coprime to $2$) $w\in\OO_M$, not necessarily prime,
$$
[w] := \left(\frac{\rat(w\tau(w))}{w}\right)_4,
$$
where $(\cdot/\cdot)_4$ is the quartic residue symbol in $M$; we recall the definition of $(\cdot/\cdot)_4$ in the next section. A simple computation shows that $\rat(w\tau(w)) > 0$ for any non-zero $w\in\OO_M$. Hence $16|h(-8p)$ if and only if $[w] = 1$, where $w$ is any element of $\OO_M$ such that $\Norm(w) = p$ and $\rat(w)\equiv 1\bmod 8$.

Given a Dirichlet character $\chi$ modulo $8$, we define, for any odd $w\in\OO_M$, 
$$
[w]_{\chi} := [w]\cdot \chi(\rat(w\tau(w))).
$$
Then
$$
\frac{1}{4}\sum_{\chi\bmod 8}[w]_{\chi} = 
\begin{cases}
[w] & \text{ if }\rat(w\tau(w)) \equiv 1\bmod 8, \\
0 & \text{ otherwise,}
\end{cases}
$$
where the sum is over all Dirichlet characters modulo $8$. Another simple computation shows that, for all odd $w\in\OO_M$, we have $[\zeta_8w] = [w]$. We note that $\rat(\ve^2 \alpha)\equiv 3\cdot \rat(\alpha)\bmod 8$ for any $\alpha\in\Z[\sqrt{2}]$, so that $\chi(\rat(\ve^2 w \tau(\ve^2 w))) = \chi(\rat(w\tau(w)))$ for every Dirichlet character $\chi$ modulo $8$. Finally, we note that
\begin{equation}\label{bracket2}
[w] = \left(\frac{16\rat(w\tau(w))}{w}\right)_4 = \left(\frac{8\sigma(w)\sigma\tau(w)}{w}\right)_4,
\end{equation}
so that 
$$
[\ve w] = \left(\frac{\sigma(\ve)}{w}\right)_2[w],
$$
and hence $[\ve^2 w] = [w]$. Having determined the action of the units $\OO_M^{\times}$ on $[\cdot]_{\chi}$, we can define, for each Dirichlet character $\chi$ modulo $8$, a sequence $\{a(\chi)_{\nn}\}_{\nn}$ indexed by \textit{ideals} of $\OO_M$ by setting $a(\chi)_{\nn} = 0$ if $\nn$ is even, and otherwise 
\begin{equation}\label{defachin}
a(\chi)_{\nn}:= [w]_{\chi} + [\ve w]_{\chi},
\end{equation}
where $w$ is any generator of the odd ideal $\nn$. Again because $\rat(\ve^2 \alpha)\equiv 3\cdot \rat(\alpha)\bmod 8$ for any $\alpha\in\Z[\sqrt{2}]$, we see that if $8|h(-8p)$, then exactly one of $\rat(w\tau(w))$ and $\rat(\ve w\tau(\ve w))$ is $1\bmod 8$, and if $8\nmid h(-8p)$, then neither is $1\bmod 8$. We have proved
\begin{prop}\label{seqCrit}
Let $p\equiv 1\bmod 8$ be a prime, and let $\pp$ be a prime ideal of $\OO_M$ lying above~$p$. Then
$$
\frac{1}{4}\sum_{\chi\bmod 8}a(\chi)_{\pp} =
\begin{cases}
1 & \text{ if }16|h(-8p), \\
-1 & \text{ if }8|h(-8p)\text{ but }16\nmid h(-8p), \\
0 & \text{otherwise,}
\end{cases}
$$
where the sum is over Dirichlet characters modulo $8$.
\end{prop}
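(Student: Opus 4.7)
The plan is to simply unwind the definitions and assemble the facts established in the paragraphs preceding the statement. First, I would fix a generator $w\in\OO_M$ of $\pp$ and write
$$
\frac{1}{4}\sum_{\chi\bmod 8} a(\chi)_\pp \;=\; \frac{1}{4}\sum_{\chi}[w]_\chi + \frac{1}{4}\sum_\chi [\ve w]_\chi,
$$
noting that this expression is independent of the choice of $w$: any other generator differs by a unit $\zeta_8^k\ve^m$, and the identities $[\zeta_8 w]=[w]$, $[\ve^2 w]=[w]$, together with $\zeta_8\tau(\zeta_8)=1$ and $\rat(\ve^2\alpha)\equiv 3\rat(\alpha)\bmod 8$, imply that the pair $\{[w]_\chi,[\ve w]_\chi\}$ is invariant under the unit group action.

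Next, I would apply the orthogonality relation
$$
\frac{1}{4}\sum_\chi [z]_\chi \;=\; \begin{cases} [z] & \text{if }\rat(z\tau(z))\equiv 1\bmod 8, \\ 0 & \text{otherwise,}\end{cases}
$$
to each of $z=w$ and $z=\ve w$, and split into cases based on whether $8\mid h(-8p)$. When $8\nmid h(-8p)$, the discussion preceding the proposition shows that neither $\rat(w\tau(w))$ nor $\rat(\ve w\tau(\ve w))$ can be $\equiv 1\bmod 8$, so both character sums vanish, giving the desired $0$.

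When $8\mid h(-8p)$, exactly one of $\rat(w\tau(w))$, $\rat(\ve w\tau(\ve w))$ is $\equiv 1\bmod 8$; after possibly swapping $w$ and $\ve w$ (which leaves $a(\chi)_\pp$ unchanged by the well-definedness observation), I may assume it is $\rat(w\tau(w))$. The sum then collapses to $[w]$, and the Leonard--Williams criterion restated in the excerpt gives $16\mid h(-8p)\iff [w]=1$. Since $[w]\in\{\pm 1\}$ (a quartic residue symbol of the rational integer $\rat(w\tau(w))$ evaluated at the prime $w$ above $p\equiv 1\bmod 8$), the value is $+1$ or $-1$ exactly in the two remaining cases of the statement.

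I do not anticipate any real obstacle: the proposition is essentially a bookkeeping consequence of the criteria and invariance properties already established. The only point that needs a small verification is the well-definedness of $a(\chi)_\pp$ under the full unit action — but this reduces to the invariances listed above, since the only generators of $\pp$ beyond $\zeta_8^k w$ are $\ve^m$-multiples thereof, which are handled by swapping the two summands $[w]_\chi\leftrightarrow[\ve w]_\chi$.
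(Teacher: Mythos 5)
Your proof is correct and mirrors the paper's own argument, which presents the proposition as a direct consequence of the definitions and invariance identities established in the preceding paragraphs (the orthogonality relation, $[\zeta_8 w]=[w]$, $[\ve^2 w]=[w]$, $\zeta_8\tau(\zeta_8)=1$, $\rat(\ve^2\alpha)\equiv 3\rat(\alpha)\bmod 8$, and the Leonard--Williams criterion). The only difference is cosmetic: you organize the bookkeeping into an explicit case analysis, whereas the paper declares ``We have proved'' after assembling the same facts.
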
 

\section{Prerequisites}\label{sQR}
We now collect some definitions and facts that we will use in our proof of Theorem~\ref{mainThm}.

\subsection{Quartic residue symbols and quartic reciprocity}
Let $L$ be a number field with ring of integers $\OO_L$. Let $\pp$ be an odd prime ideal of $\OO_L$ and let $\alpha\in\OO_L$. One defines the \textit{quadratic residue symbol} $\left(\alpha/\pp\right)_{L, 2}$ by setting
\[
\left(\frac{\alpha}{\pp}\right)_{L, 2} := 
\begin{cases}
0 & \text{if }\alpha\in\pp \\
1 & \text{if }\alpha\notin\pp\text{ and }\alpha\equiv \beta^2\bmod \pp\text{ for some }\beta\in\OO_L \\
-1 & \text{otherwise.}
\end{cases}
\]
Then we have $(\alpha/\pp)_{L, 2} \equiv \alpha^{\frac{\Norm_{L/\Q}(\pp) - 1}{2}} \bmod \pp$. The quadratic residue symbol is then extended multiplicatively to all odd ideals $\nn$, and then also to all odd elements $\beta$ in $\OO_L$ by setting $(\alpha/\beta)_{L, 2} = (\alpha/\beta\OO_L)_{L, 2}$. To define the quartic residue symbol, we assume that $L$ contains $\Q(i)$. Then one can define the \textit{quartic residue symbol} $(\alpha/\pp)_{L, 4}$ as the element of $\{\pm 1, \pm i, 0\}$ such that
\[
\left(\frac{\alpha}{\pp}\right)_{L, 4} \equiv \alpha^{\frac{\Norm_{L/\Q}(\pp) - 1}{4}} \bmod \pp,
\]
and extend this to all odd ideals $\nn$ and odd elements $\beta$ in the same way as the quadratic residue symbol. A key property of the quartic residue symbol that we will use extensively is the following weak version of quartic reciprocity in $M := \Q(\zeta_8)$.
\begin{lemma}
\label{tQR}
Let $\alpha, \beta \in \OO_M$ with $\beta$ odd. Then $(\alpha/\beta)_{M, 4}$ depends only on the congruence class of $\beta$ modulo $16\alpha\OO_M$. Moreover, if $\alpha$ is also odd, then
$$
\left(\frac{\alpha}{\beta}\right)_{M, 4} = \mu \cdot \left(\frac{\beta}{\alpha}\right)_{M, 4},
$$
where $\mu\in\{\pm 1, \pm i\}$ depends only on the congruence classes of $\alpha$ and $\beta$ modulo $16\OO_{M}$.
\end{lemma}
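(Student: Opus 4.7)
My plan is to derive both parts from Artin reciprocity applied to the Kummer extension $L := M(\sqrt[4]{\alpha})/M$, reducing everything to a single local computation at the unique prime $\pp$ of $\OO_M$ above $2$. Write $\pi := 1 - \zeta_8$, so $\pp = (\pi)$ and $(16) = \pp^{16}$, and let $U^{(n)} := 1 + \pi^n \OO_{M,\pi}$ denote the higher unit filtration in the completion $M_\pi$. The key local input is the inclusion $U^{(16)} \subseteq (M_\pi^\times)^4$: for $u = 1 + 16x$, setting $f(y) := y^4 - u$ and $y_1 := 1 + 4x$, a direct expansion gives $f(y_1) = 96x^2 + 256x^3 + 256x^4$ with $v_\pi(f(y_1)) \geq 20$, while $v_\pi(f'(y_1)) = v_\pi(4(1+4x)^3) = 8$; Hensel's lemma then yields a fourth root of $u$ in $M_\pi$. (The naive $y_0 = 1$ gives only $v_\pi(f(1)) = 16 = 2 v_\pi(f'(1))$ for $x$ a unit, just failing Hensel's strict inequality.)

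For Part 1, the map $\beta \mapsto (\alpha/\beta)_{M,4}$ on ideals coprime to $2\alpha$ is the Artin character of $L/M$, hence by Artin reciprocity factors through the ray class group of $M$ modulo $\ff(L/M)$. At odd primes $\qq \mid \alpha$, the residue characteristic is coprime to $4$, so the ramification in $L/M$ is tame and the local conductor exponent is at most $1$; the odd part of $\ff(L/M)$ therefore divides $\alpha\OO_M$. At $\pp$, the chain of inclusions $U^{(16)} \subset (M_\pi^\times)^4 \subseteq N_{L_\pp/M_\pi}(L_\pp^\times)$ places $U^{(16)}$ inside the kernel of the local Artin map, bounding the local conductor at $\pp$ by $\pp^{16}$. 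Since $M$ has no real places, this gives $\ff(L/M) \mid 16\alpha\OO_M$; the passage from ray-class equivalence to element-level congruence modulo $16\alpha\OO_M$ is routine, since $\beta_1 \equiv \beta_2 \bmod 16\alpha\OO_M$ implies $\beta_2/\beta_1 \equiv 1$ locally at every prime dividing the conductor.

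For Part 2, I invoke Hilbert's product formula $\prod_v (\alpha, \beta)_v = 1$ for the quartic norm residue symbol on $M$. At each odd prime $\qq$, the tame formula
$$
(\alpha, \beta)_\qq = \left(\frac{(-1)^{v_\qq(\alpha) v_\qq(\beta)} \alpha^{v_\qq(\beta)} \beta^{-v_\qq(\alpha)}}{\qq}\right)_{M,4}
$$
re-expresses each local Hilbert symbol via quartic residue symbols. Regrouping over $\qq \mid \alpha$ and $\qq \mid \beta$ and using that $\alpha, \beta$ are coprime and odd rearranges the product formula into
$$
\left(\frac{\alpha}{\beta}\right)_{M,4} \left(\frac{\beta}{\alpha}\right)_{M,4}^{-1} = \mu,
$$
where $\mu$ collects a product of $(-1)$-type signs and the single remaining local factor $(\alpha, \beta)_\pp^{\pm 1}$ (the complex archimedean places contribute nothing). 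The Hilbert symbol $(\alpha, \beta)_\pp$ is bilinear and trivial on fourth powers in either argument, so the inclusion $U^{(16)} \subset (M_\pi^\times)^4$ forces it to depend only on $\alpha, \beta$ modulo $U^{(16)}$, equivalently modulo $16\OO_M$. The main obstacle is choreographing the Hilbert-symbol sign conventions so that the non-$\pp$ contributions cancel in the claimed fashion; the essential analytic input — namely $U^{(16)} \subset (M_\pi^\times)^4$ — is the Hensel computation from the first paragraph and is common to both parts.
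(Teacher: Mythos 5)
Your proposal is correct, but it proceeds by a genuinely different route than the paper, which disposes of this lemma in one line by citing Lemmermeyer's Proposition~6.11 (a general statement on power residue symbols and reciprocity). You instead give a self-contained class-field-theoretic derivation, and the single piece of hard content in it — the Hensel-lemma computation showing $U^{(16)} = 1 + 16\OO_{M,\pi} \subseteq (M_\pi^\times)^4$, with the refined approximant $y_1 = 1 + 4x$ needed because $y_0 = 1$ only achieves equality $v_\pi(f(1)) = 16 = 2 v_\pi(f'(1))$ — is exactly what pins down the modulus $16$ in the statement. Both halves then follow cleanly: for Part~1, the Artin character of $M(\sqrt[4]{\alpha})/M$ factors through the ray class group modulo the conductor, which your tame estimate bounds at odd $\qq \mid \alpha$ and your Hensel computation bounds by $\pp^{16}$ at the ramified prime above $2$; for Part~2, Hilbert's product formula with the tame symbol rearranges to $(\alpha/\beta)_{M,4} = (\alpha,\beta)_\pp^{-1}(\beta/\alpha)_{M,4}$, and $(\alpha,\beta)_\pp$ kills $U^{(16)}$ in each argument. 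One reassurance on the point you flagged: the ``$(-1)$-type signs'' $(-1)^{v_\qq(\alpha)v_\qq(\beta)}$ in the tame formula all vanish because $\alpha$ and $\beta$ may be taken coprime (otherwise both symbols are $0$ and the identity is vacuous), so exactly one of $v_\qq(\alpha), v_\qq(\beta)$ is nonzero at each odd $\qq$; the only convention-dependent quantity is whether $\mu = (\alpha,\beta)_\pp$ or its inverse, which does not affect the conclusion. What your approach buys over the paper's is transparency (no black-box reference) and an explicit reason why the modulus $16 = \pp^{16}$ is the natural one; what the citation buys is brevity.
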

\begin{proof}
This follows from \cite[Proposition 6.11, p.\ 199]{Lemmermeyer}.
\end{proof}

\subsection{Field lowering}
A key feature of our proof is the reduction of quartic residue symbols in a quartic number field to quadratic residue symbols in a quadratic field. We do this by using the following three lemmas.

\begin{lemma}
\label{lAlg1}
Let $K$ be a number field and let $\pp$ be an odd prime ideal of $K$. Suppose that $L$ is a quadratic extension of $K$ such that $L$ contains $\Q(i)$ and $\pp$ splits in $L$. Denote by $\psi$ the non-trivial element in $\text{Gal}(L/K)$. Then if $\psi$ fixes $\Q(i)$ we have for all $\alpha \in \OO_K$
\[
\left(\frac{\alpha}{\pp\OO_L}\right)_{L, 4} = \left(\frac{\alpha}{\pp\OO_K}\right)_{K, 2}
\]
and if $\psi$ does not fix $\Q(i)$ we have for all $\alpha \in \OO_K$ with $\pp \nmid \alpha$
\[
\left(\frac{\alpha}{\pp\OO_L}\right)_{L, 4} = 1
\]
\end{lemma}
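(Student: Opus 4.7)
The plan is to reduce the quartic symbol over $L$ to a product of symbols at the two primes above $\pp$, then use Galois equivariance of residue symbols to collapse these to an expression intrinsic to $K$.

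First, since $\pp$ splits in $L$, factor $\pp\OO_L = \PP \cdot \psi(\PP)$ and use multiplicativity of the quartic residue symbol to write
$$
\left(\frac{\alpha}{\pp\OO_L}\right)_{L, 4} = \left(\frac{\alpha}{\PP}\right)_{L, 4}\cdot \left(\frac{\alpha}{\psi(\PP)}\right)_{L, 4}.
$$
Next, invoke Galois equivariance: for any automorphism $\sigma$ of $L$, the defining congruence $(\alpha/\PP)_{L,4} \equiv \alpha^{(\Norm_{L/\Q}(\PP)-1)/4} \bmod \PP$ is preserved by $\sigma$, giving $(\sigma\alpha/\sigma\PP)_{L,4} = \sigma((\alpha/\PP)_{L,4})$. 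Applied with $\sigma = \psi$ and using $\psi(\alpha) = \alpha$ (because $\alpha \in \OO_K$), this yields
$$
\left(\frac{\alpha}{\psi(\PP)}\right)_{L, 4} = \psi\!\left(\!\left(\frac{\alpha}{\PP}\right)_{L, 4}\right).
$$

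Now split into the two cases. In Case 1, $\psi$ fixes $\Q(i)$, so $\psi$ acts as the identity on the values $\{\pm 1, \pm i\}$ of the quartic symbol, and the display above becomes $(\alpha/\PP)_{L,4}^2$. Since $\Norm_{L/\Q}(\PP) = \Norm_{K/\Q}(\pp) =: q$, squaring the defining congruence gives
$$
\left(\frac{\alpha}{\PP}\right)_{L,4}^{2} \equiv \alpha^{(q-1)/2} \equiv \left(\frac{\alpha}{\pp\OO_K}\right)_{K, 2} \bmod \PP,
$$
and because both sides lie in $\{0, \pm 1\}$ and $\PP$ is odd, the congruence is an equality, which proves the first claim (the case $\pp \mid \alpha$ gives $0 = 0$).

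In Case 2, $\psi$ sends $i \mapsto -i$, so it acts as complex conjugation on the values $\{\pm 1, \pm i\}$; hence the two factors in the product are complex conjugates. Writing $\zeta := (\alpha/\PP)_{L,4}$, the hypothesis $\pp \nmid \alpha$ ensures $\PP \nmid \alpha$ (and $\psi(\PP) \nmid \alpha$), so $\zeta$ is a genuine fourth root of unity, and the product collapses to $\zeta \cdot \overline{\zeta} = 1$. The main (mild) obstacle is simply ensuring that the Galois equivariance step is written cleanly and that one correctly tracks how $\psi$ acts on the value group of the symbol; everything else is bookkeeping.
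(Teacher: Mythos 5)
Your proof is correct and follows essentially the same route as the paper's: factor $\pp\OO_L$ as a product of conjugate primes, use Galois equivariance of the residue symbol to relate the two factors, and then split on whether $\psi$ fixes $i$. The only cosmetic difference is that you unpack the step $(\alpha/\PP)_{L,4}^2 = (\alpha/\pp\OO_K)_{K,2}$ via the defining congruences, whereas the paper cites it directly; both are fine.
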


\begin{proof}
Since $\pp$ splits in $L$, we can write $\pp = \mathfrak{q} \psi(\mathfrak{q})$ for some prime ideal $\mathfrak{q}$ of $L$. Hence we have
\[
\left(\frac{\alpha}{\pp\OO_L}\right)_{L, 4} = \left(\frac{\alpha}{\mathfrak{q}}\right)_{L, 4} \left(\frac{\alpha}{\psi(\mathfrak{q})}\right)_{L, 4}.
\]
If $\psi$ fixes $i$ we find that
\[
\left(\frac{\alpha}{\mathfrak{q}}\right)_{L, 4} = \psi \left(\left(\frac{\alpha}{\mathfrak{q}}\right)_{L, 4}\right) = \left(\frac{\psi(\alpha)}{\psi(\mathfrak{q})}\right)_{L, 4} = \left(\frac{\alpha}{\psi(\mathfrak{q})}\right)_{L, 4}.
\]
Combining this with the previous identity gives
\[
\left(\frac{\alpha}{\pp\OO_L}\right)_{L, 4} = \left(\frac{\alpha}{\mathfrak{q}}\right)_{L, 4}^2 = \left(\frac{\alpha}{\mathfrak{q}}\right)_{L, 2} = \left(\frac{\alpha}{\pp\OO_K}\right)_{K, 2},
\]
establishing the first part of the lemma. If $\psi$ does not fix $i$ we find that
\[
\left(\frac{\alpha}{\pp\OO_L}\right)_{L, 4} = \left(\frac{\alpha}{\mathfrak{q}}\right)_{L, 4} \left(\frac{\alpha}{\psi(\mathfrak{q})}\right)_{L, 4} = \left(\frac{\alpha}{\mathfrak{q}}\right)_{L, 4} \psi \left(\left(\frac{\alpha}{\mathfrak{q}}\right)_{L, 4}\right) = 1
\]
by checking this for all values of $(\alpha/\mathfrak{q})_{L, 4} \in \{\pm 1, \pm i\}$. This completes the proof.
\end{proof}

\begin{lemma}
\label{lAlg2}
Let $K$ be a number field and let $\pp$ be an odd prime ideal of $K$ of degree $1$ lying above $p$. Suppose that $L$ is a quadratic extension of $K$ such that $L$ contains $\Q(i)$ and $\pp$ stays inert in $L$. Then we have for all $\alpha \in \OO_K$
\[
\left(\frac{\alpha}{\pp\OO_L}\right)_{L, 4} = \left(\frac{\alpha}{\pp\OO_K}\right)_{K, 2}^{\frac{p + 1}{2}}.
\]
\end{lemma}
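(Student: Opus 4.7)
The plan is to reduce both sides of the claimed identity to congruences modulo $\pp\OO_L$ and then match them by a simple exponent calculation. Since $\pp$ has residue degree one over $p$, its absolute norm is $\Norm_{K/\Q}(\pp) = p$, and since $\pp$ stays inert in $L$, the ideal $\pp\OO_L$ is prime with absolute norm $\Norm_{L/\Q}(\pp\OO_L) = p^2$. From the defining congruences of the two residue symbols we have
\[
\left(\frac{\alpha}{\pp\OO_L}\right)_{L,4} \equiv \alpha^{\frac{p^2-1}{4}} \pmod{\pp\OO_L}, \qquad \left(\frac{\alpha}{\pp\OO_K}\right)_{K,2} \equiv \alpha^{\frac{p-1}{2}} \pmod{\pp\OO_K}.
\]

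Next, I would raise the second congruence to the $\frac{p+1}{2}$-th power. Using the elementary identity $\frac{(p-1)(p+1)}{4} = \frac{p^2-1}{4}$, this gives
\[
\left(\frac{\alpha}{\pp\OO_K}\right)_{K,2}^{\frac{p+1}{2}} \equiv \alpha^{\frac{p^2-1}{4}} \pmod{\pp\OO_K},
\]
which a fortiori holds modulo $\pp\OO_L$ since $\pp\OO_K \subseteq \pp\OO_L$. Combining with the first congruence yields
\[
\left(\frac{\alpha}{\pp\OO_L}\right)_{L,4} \equiv \left(\frac{\alpha}{\pp\OO_K}\right)_{K,2}^{\frac{p+1}{2}} \pmod{\pp\OO_L}.
\]

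Finally, I would promote this congruence to an equality in $\OO_L$. If $\pp \mid \alpha$, both sides vanish, so assume otherwise; then each side is a nonzero fourth root of unity in $\OO_L$. The residue field $\OO_L/\pp\OO_L \cong \FF_{p^2}$ has multiplicative group of order $p^2 - 1$, which is divisible by $4$ since $p$ is odd, so the four elements $\{\pm 1, \pm i\} \subset \OO_L$ reduce to four distinct fourth roots of unity in $\FF_{p^2}^{\times}$. Hence distinct fourth roots of unity cannot be congruent modulo $\pp\OO_L$, and the congruence promotes to equality. The only (mild) thing to check is this injectivity of the reduction map on the fourth roots of unity; the rest is a direct unravelling of the defining congruences of the two residue symbols.
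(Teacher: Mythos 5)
Your proof is correct and follows exactly the same route as the paper's: express both residue symbols via their defining congruences, apply the exponent identity $\tfrac{(p-1)(p+1)}{4}=\tfrac{p^2-1}{4}$, and conclude. The only difference is that you spell out the final promotion from congruence to equality (injectivity of $\{\pm1,\pm i\}$ under reduction mod $\pp\OO_L$), which the paper leaves implicit with ``which immediately implies the lemma.''
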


\begin{proof}
We have
\[
\left(\frac{\alpha}{\pp\OO_L}\right)_{L, 4} \equiv \alpha^{\frac{\Norm_L(\pp) - 1}{4}} \equiv \alpha^{\frac{p^2 - 1}{4}} \equiv \left(\alpha^{\frac{p - 1}{2}}\right)^\frac{p + 1}{2} \equiv \left(\alpha^{\frac{\Norm_K(\pp) - 1}{2}}\right)^\frac{p + 1}{2} \equiv \left(\frac{\alpha}{\pp\OO_K}\right)_{K, 2}^{\frac{p + 1}{2}} \bmod \pp,
\]
which immediately implies the lemma.
\end{proof}

\noindent Note that the previous lemmas only work if $\alpha \in \OO_K$. Our last lemma gives a way to ensure that $\alpha \in \OO_K$.

\begin{lemma}
\label{lAlg3}
Let $K$ be a number field and let $L$ be a quadratic extension of $K$. Denote by $\psi$ the non-trivial element in $\text{Gal}(L/K)$. Suppose that $\pp$ is a prime ideal of $K$ that does not ramify in $L$ and further suppose that $\beta \in \OO_L$ satisfies $\beta \equiv \psi(\beta) \bmod \pp \OO_L$. Then there is $\beta' \in \OO_K$ such that $\beta' \equiv \beta \bmod \pp\OO_L$.
\end{lemma}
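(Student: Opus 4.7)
The plan is to analyze the quotient ring $\OO_L/\pp\OO_L$ together with the $\psi$-action it carries, and to show that the $\psi$-fixed subring is precisely the image of $\OO_K/\pp$ under the natural inclusion $\OO_K/\pp\hookrightarrow\OO_L/\pp\OO_L$. Once this identification is in hand, the hypothesis $\beta\equiv\psi(\beta)\bmod\pp\OO_L$ forces the residue class of $\beta$ to be the image of some class in $\OO_K/\pp$, and lifting that class to $\beta'\in\OO_K$ immediately gives the conclusion. I would split the analysis according to the two possible splitting behaviours of the unramified prime $\pp$ in $L$.

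In the split case $\pp\OO_L=\PP_1\PP_2$ with $\psi(\PP_1)=\PP_2$. Because $\pp$ is unramified and splits, the inertia degree is one, so each residue field $\OO_L/\PP_i$ coincides with $k:=\OO_K/\pp$. The Chinese remainder theorem then yields a ring isomorphism
\[
\OO_L/\pp\OO_L\;\cong\;(\OO_L/\PP_1)\times(\OO_L/\PP_2)\;\cong\;k\times k,
\]
under which $\psi$ acts by swapping coordinates. The $\psi$-fixed subring is the diagonal $\{(a,a):a\in k\}$, which is exactly the image of the natural map $\OO_K/\pp\hookrightarrow \OO_L/\pp\OO_L$.

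In the inert case $\pp\OO_L$ is itself a prime of $\OO_L$, and $\OO_L/\pp\OO_L$ is a quadratic field extension of $k=\OO_K/\pp$. Here I would invoke the standard fact that the decomposition group $D_\pp\subseteq\Gal(L/K)$ surjects onto $\Gal((\OO_L/\pp\OO_L)/k)$ with kernel equal to the inertia group, which is trivial since $\pp$ is unramified. Consequently, the reduction of $\psi$ is the non-trivial Galois automorphism of the residue extension, whose fixed subfield is exactly $k$. Again the $\psi$-fixed elements coincide with the image of $\OO_K/\pp$, and lifting completes the proof.

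I do not anticipate a serious obstacle. The only non-formal ingredient is the identification of the induced $\psi$-action on the residue extension in the inert case, which is a standard consequence of unramifiedness; the split case is purely elementary via CRT and the transitivity of the $\Gal(L/K)$-action on primes above $\pp$.
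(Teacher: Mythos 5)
Your proof is correct and is essentially the same as the paper's: both split into the inert case (where triviality of inertia forces $\psi$ to reduce to the residue-field Frobenius, whose fixed field is $\OO_K/\pp$) and the split case (where CRT gives $\OO_L/\pp\OO_L\cong k\times k$ with $\psi$ swapping factors, so the fixed subring is the diagonal image of $\OO_K/\pp$). The only cosmetic difference is that the paper describes the $\psi$-action on $\OO_L/\qq\times\OO_L/\psi(\qq)$ as $(x,y)\mapsto(\psi(y),\psi(x))$ before identifying with $k\times k$, whereas you identify both factors with $k$ first and then observe $\psi$ is the coordinate swap; these amount to the same computation.
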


\begin{proof}
Since by assumption $\pp$ does not ramify in $L$, we may assume that $\pp$ splits or stays inert in $L$. Let us first do the case that $\pp$ stays inert, which means precisely that $\psi(\pp) = \pp$. We conclude that $\psi$ is in the decomposition group of $\pp$. Furthermore, the inertia group of $\pp$ is trivial by the assumption that $\pp$ does not ramify. Since $\psi$ is not the identity, it follows that $\psi$ must become the Frobenius of the finite field extension $\OO_K/\pp \xhookrightarrow{} \OO_L/\pp$. Then $\beta \equiv \psi(\beta) \bmod \pp\OO_L$ means that $\beta$ is fixed by Frobenius. We conclude that $\beta$ comes from $\OO_K/\pp$, which we had to prove.

We still have to prove the lemma if $\pp$ splits. In this case we can write $\pp = \mathfrak{q} \psi(\mathfrak{q})$ for some prime ideal $\mathfrak{q}$ of $L$. Note that
\begin{align}
\label{eIsom}
\OO_K/\pp \xhookrightarrow{} \OO_L/\pp\OO_L \cong \OO_L/\mathfrak{q} \times \OO_L/\psi(\mathfrak{q}).
\end{align}
One checks that $\psi$ is the automorphism of $\OO_L/\mathfrak{q} \times \OO_L/\psi(\mathfrak{q})$ that maps $(x, y)$ to $(\psi(y), \psi(x))$. Hence $\beta \equiv \psi(\beta) \bmod \pp\OO_L$ implies that there is some $x \in \OO_L/\mathfrak{q}$ such that $\beta = (x, \psi(x))$ as an element of $\OO_L/\mathfrak{q} \times \OO_L/\psi(\mathfrak{q})$. Since $\OO_K/\pp \cong \OO_L/\mathfrak{q}$, we can pick $\beta' \in \OO_K$ such that $\beta'$ maps to $x$ under the natural inclusion $\OO_K/\pp \xhookrightarrow{} \OO_L/\mathfrak{q}$. Then it follows that $\beta$ maps to $(\beta', \psi(\beta'))$ under the maps given as in (\ref{eIsom}). This implies that $\beta' \equiv \beta \bmod \pp\OO_L$ as desired.
\end{proof}

\subsection{A fundamental domain for the action of $\OO_M^{\times}$}\label{sDomain}
In defining $a(\chi)_{\nn}$ for odd ideals $\nn$ of $\OO_M$, we had to choose a generator $w$ for the ideal $\nn$. There are many such choices, since the group of units of $\OO_M$ is quite large, i.e.,
$$
\OO_M^{\times} = \left\langle \zeta_8\right\rangle \times \left\langle \ve\right\rangle,
$$
where $\ve = 1+\sqrt{2}$ as before. It will be important to us that we can choose generators that are in some sense as small as possible. The following lemma allows us to do so.
\begin{lemma}
\label{fundDom}
There exists a subset $\DD$ of $\OO_M$ such that:
\begin{enumerate}
	\item $\DD$ is a fundamental domain for the natural action of $\OO_M^{\times}$ on $\OO_M$, i.e., for all $w\in\OO_M$, there exists a unique $n\in \Z$ such that $\ve^nw\in\DD$; and
	\item every non-zero ideal $\nn$ in $\OO_M$ has exactly $8$ generators in $\DD$; if $w$ is one such generator, then all such generators are of the form $\zeta w$, where $\zeta\in\left\langle \zeta_8\right\rangle$; and
	\item there exists a constant $C > 0$ such that for all $w = w_1+w_2\zeta_8 + w_3\zeta_8^2+w_4\zeta_8^3 \in \DD$ with $w_i\in\Z$, we have $|w_i|\leq C\cdot\Norm(\alpha)^{\frac{1}{4}}$.
\end{enumerate}
\end{lemma}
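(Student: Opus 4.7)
The plan is to construct $\DD$ from the Minkowski embedding of $M$, exploiting the fact that $M$ has unit rank exactly one. I would fix two complex embeddings $\sigma_1, \sigma_2 : M\hookrightarrow \mathbb{C}$, one from each pair of complex conjugates, with $\sigma_1(\sqrt{2}) = \sqrt{2}$ and $\sigma_2(\sqrt{2}) = -\sqrt{2}$. Then $|\sigma_1(\ve)| = \ve$ while $|\sigma_2(\ve)| = \ve^{-1}$, so multiplication by $\ve$ acts as a hyperbolic scaling on the pair $(|\sigma_1(w)|, |\sigma_2(w)|)$ that preserves $\Norm(w) = |\sigma_1(w)|^2 |\sigma_2(w)|^2$, while the torsion units $\zeta_8^j$ act trivially on this pair because they have absolute value $1$ at both complex places. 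This suggests taking
$$
\DD := \{0\}\cup\left\{w\in \OO_M\setminus\{0\}\ :\ \ve^{-1}\leq |\sigma_1(w)/\sigma_2(w)|<\ve\right\}.
$$

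For (1), the map $w\mapsto \log|\sigma_1(w)/\sigma_2(w)|$ intertwines the $\ve^{\Z}$-action on $\OO_M\setminus\{0\}$ with translation by $2\log\ve$ on $\mathbb{R}$, and the interval $[-\log\ve,\log\ve)$ of length $2\log\ve$ is then a fundamental domain, yielding a unique valid $n$ for each non-zero $w$. For (2), every generator of a non-zero ideal $\nn$ lies in a single $\OO_M^{\times}$-orbit $\langle\zeta_8\rangle\langle\ve\rangle\cdot w_0$; since $\zeta_8^j$ preserves the ratio $|\sigma_1/\sigma_2|$, applying (1) to each of the eight elements $\zeta_8^j w_0$ produces exactly eight distinct generators in $\DD$, and these exhaust the orbit.

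Property (3) carries the quantitative content. From $|\sigma_1(w)||\sigma_2(w)| = \Norm(w)^{1/2}$ and the ratio constraint $|\sigma_1(w)/\sigma_2(w)|\in[\ve^{-1},\ve)$ one immediately reads off $|\sigma_j(w)|\leq \ve^{1/2}\Norm(w)^{1/4}$ for $j=1,2$, and the same bound at the two complex conjugate embeddings. Writing $w = \sum_{i=0}^3 w_i\zeta_8^i$, the coefficient vector $(w_0,w_1,w_2,w_3)$ is obtained from the tuple $(\sigma_1(w),\overline{\sigma_1(w)},\sigma_2(w),\overline{\sigma_2(w)})$ by multiplying by the inverse of a fixed Vandermonde matrix on the four primitive $8$th roots of unity, which is invertible since these roots are distinct. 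Since this inverse has entries depending only on $M$, I obtain $|w_i|\leq C\cdot\Norm(w)^{1/4}$ for an absolute constant $C$.

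I do not anticipate any substantive obstacle; this is the standard geometry-of-numbers construction in the unit rank one setting. The only mildly delicate point is verifying that the change-of-basis matrix between coefficients and embedded coordinates has an absolute bound on its entries, but this is automatic because $M$ is a fixed extension of $\Q$. The edge case $w = 0$ in (1) is harmless: including $0$ in $\DD$ makes the unique-$n$ statement technically fail at the origin, but this is immaterial for the later application to ideals, which are always non-zero.
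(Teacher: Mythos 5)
Your construction of $\DD$ via the ratio of absolute values at the two complex places, and the verification of the coefficient bound through the invertible Vandermonde change-of-basis, is the standard Dirichlet-unit-theorem/geometry-of-numbers argument in unit rank one; this is essentially the same approach as the construction the paper points to in the cited references (KM1, Section~2.6 and Marcus, Chapter~6). All three parts go through as you describe, and the caveat about $w=0$ is indeed immaterial for the application to non-zero ideals.
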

\begin{proof}
For a similar construction see \cite[Section 2.6, p.\ 12]{KM1}, which is based on \cite[Chapter 6, p.\ 158-181]{Marcus}.
\end{proof}

\subsection{The sieve}
We will prove Theorem~\ref{mainThm} by a sieve of Friedlander et al.\ \cite{FIMR} that generalizes the ideas of Vinogradov \cite{Vino1, Vino2} to the setting of number fields. Let $\chi$ be a Dirichlet character modulo $8$, and let $a(\chi)_{\nn}$ be defined as in \eqref{defachin}. We will prove the following two propositions.
\begin{prop}
\label{typeIprop}
There exists a real number $\theta_1 \in (0, 1)$ such that for every $\epsilon>0$, we have
$$
\sum_{\Norm(\nn)\leq X,\ \mm|\nn}a(\chi)_{\nn}\ll_{\epsilon} X^{1-\theta_1+\epsilon}
$$
uniformly for all non-zero ideals $\mm$ of $\OO_M$ and all $X\geq 2$. One can take $\theta_1 = \frac{1}{64}$.
\end{prop}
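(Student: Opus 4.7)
The plan is to turn the Type I sum into a multidimensional character sum amenable to P\'olya--Vinogradov, the character arising from the quartic symbol $[w]$ after reciprocity and descent to a quadratic subfield. Without loss of generality assume $\mm$ is odd (otherwise every $\nn$ with $\mm\mid\nn$ is even, so $a(\chi)_\nn=0$), and fix a generator $m$ of $\mm$ in the fundamental domain $\DD$ of Lemma~\ref{fundDom}. Parameterizing $\nn=\mm\aaa$ by generators $a\in\DD$ of the odd ideal $\aaa$ coprime to $m$, the target sum becomes
$$
\sum_{\substack{a\in\DD,\ a\text{ odd},\ \gcd(a,m)=1\\ \Norm(a)\leq Y}}\bigl([ma]_{\chi}+[\ve ma]_{\chi}\bigr),\qquad Y:=X/\Norm(\mm),
$$
and by Lemma~\ref{fundDom}(3) the variable $a$ ranges over a box of side $\ll Y^{1/4}$ in $\OO_M\cong\Z^4$.

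The next step is to apply multiplicativity of the symbol in the denominator together with the weak quartic reciprocity of Lemma~\ref{tQR} to shift $a$ from the denominator to the numerator of $[ma]$. Setting $U:=\rat(ma\tau(ma))\in\Z_{>0}$, one has $U^2-2V^2=\Norm(m)\Norm(a)$ for some $V\in\Z$ with $|U|,|V|\ll \Norm(\mm)^{1/2}Y^{1/2}$, and
$$
[ma]_{\chi} \;=\; \chi(U)\left(\frac{U}{m}\right)_{M,4}\cdot \mu(a\bmod 16,\,U\bmod 16)\cdot \left(\frac{a}{U}\right)_{M,4},
$$
where $\mu$ is a fourth root of unity. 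Partitioning the outer sum by the residue class of $a$ modulo $16\OO_M$ and of $U$ modulo $16$ freezes $\mu$ and the factor $\chi(U)(U/m)_{M,4}$, so the problem reduces to bounding, for each admissible $U$ and each fixed residue class, a sum of $(a/U)_{M,4}$ over $a$, and then summing over residue classes and over $U$.

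The next reduction eliminates the quartic symbol in favor of a quadratic one. Factoring $U$ into odd rational primes $q$, Lemmas~\ref{lAlg1} and~\ref{lAlg2} evaluate each factor $(a/q\OO_M)_{M,4}$ (after a descent of $a$ modulo $q\OO_M$ to a suitable subfield via Lemma~\ref{lAlg3}) as either $\pm1$ or a quadratic Jacobi symbol $\psi_q(a^\sharp)$ in one of the subfields $\Q(i)$, $\Q(\sqrt{2})$, or $\Q(i\sqrt{2})$. Multiplying over the primes of $U$ expresses $(a/U)_{M,4}$ as $\psi_U(a^\sharp)$, where $\psi_U$ is a quadratic Dirichlet character of modulus dividing $U\ll Y^{1/2}$ and $a^\sharp$ is an $\OO_K$-descent of $a$. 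Swap the order of summation: for each $U$, the inner sum becomes a sum over the Pell-partner $V$ of an inner sum of $\psi_U(a^\sharp)$ over $a\in\DD$ with $a\tau(a)=(U+V\sqrt{2})/(m\tau(m))$. The divisor bound in $\Z[\zeta_8]$ produces only $Y^{\epsilon}$ many $a$ per $(U,V)$; combining this with the multidimensional P\'olya--Vinogradov bound for the nontrivial character $\psi_U$ over a sub-box of $\OO_K$ of side $\ll Y^{1/4}$ yields a power-saving over the trivial bound, which when carefully tracked through all intermediate steps gives $\theta_1=1/64$.

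The main obstacle is that the modulus $U$ of the Jacobi character depends quadratically on the summation variable $a$, so the character sum cannot be bounded by a single application of P\'olya--Vinogradov. The resolution is the Pell parameterization $(U,V)\leftrightarrow a\tau(a)$, which decouples the modulus from the summation variable at the cost of an extra parameter $V$. Verifying that $\psi_U$ is nontrivial for generic $U$, and that the reciprocity twist $\mu$ and the quartic-to-quadratic descent track cleanly through the congruence partitions, is the technically delicate heart of the argument; this is precisely the arithmetic structure of $M$ that, per the introduction, lets Theorem~\ref{mainThm} be unconditional.
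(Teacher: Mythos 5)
Your reduction to a character sum via reciprocity and the field-lowering lemmas is in the right spirit, but the central decoupling step does not work. Once you fix the pair $(U,V)$, the equation $a\tau(a)=(U+V\sqrt{2})/(m\tau(m))$ determines $a$ up to units and $\tau$-conjugation, so there are only $O(Y^\epsilon)$ admissible $a$ per $(U,V)$. After "swapping the order of summation" you therefore have no free variable left inside $\psi_U$: the innermost sum has $O(Y^\epsilon)$ terms, and summing trivially over the $\sim Y$ pairs $(U,V)$ merely reproduces the trivial bound $Y^{1+\epsilon}$. The subsequent appeal to "multidimensional Pólya--Vinogradov over a sub-box of side $Y^{1/4}$" has nothing to act on, because the box was already spent parameterizing $(U,V)$. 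The Pell map is a bijection (up to $Y^\epsilon$ multiplicities) from the box to the parameter set, not a fibration with large fibers over which a character could oscillate.

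There is a second, independent problem: even in a coherent version of this setup, Pólya--Vinogradov cannot give a saving. You correctly note $U\ll\Norm(\mm)^{1/2}Y^{1/2}$, so the character has conductor $\sim Y^{1/2}$ while the available range of the summation variable (one interval of length $\sim Y^{1/4}$ in $\Z$, or equivalently a two-dimensional $\OO_K$-box of volume $\sim Y^{1/2}$) is at most comparable to the square root of the conductor. That is precisely the threshold at which Pólya--Vinogradov returns the trivial bound. The paper is forced to invoke Burgess's inequality (with $r=2$), and then must strip out the squarefull part of the modulus via the factorization $(c)=\mathfrak g\qq$ and the auxiliary sums $A_\square$ and $A_2$, because Burgess requires an essentially cube-free modulus; none of this appears in your sketch, and with Pólya--Vinogradov alone the exponent $\theta_1=\tfrac{1}{64}$ is not attainable.

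What your proposal is missing is the paper's actual decoupling mechanism: the decomposition $\alpha=a+\beta$ with $a\in\Z$ and $\beta\in\MM=\Z\zeta_8\oplus\Z\zeta_8^2\oplus\Z\zeta_8^3$, applied to the generator $\alpha$ of $\nn$ itself (not of the cofactor $\nn\mm^{-1}$). Writing $[\alpha]=\bigl(8\sigma(\alpha)\sigma\tau(\alpha)/\alpha\bigr)_4$ and observing $\sigma(\alpha)\equiv\sigma(\beta)-\beta\pmod{\alpha}$ (and likewise for $\sigma\tau$), one sees that after reciprocity the modulus of the resulting symbol is governed by the $F$-free part $c$ of $\sigma(\beta)-\beta$, which depends only on $\beta$. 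Freezing $\beta$ leaves a genuine one-dimensional free variable $a$, running over $O(1)$ intervals of length $\ll x^{1/4}$, with a real Dirichlet character of modulus $q\ll x^{1/2}$ that does not move with $a$. That frozen-modulus, free-variable structure is what Burgess needs and what your reciprocity move $[ma]_\chi\mapsto(a/U)_{M,4}$, which entangles the modulus $U=\rat(ma\tau(ma))$ with the variable, fails to provide.
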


\begin{prop}
\label{typeIIprop}
There exists a real number $\theta_2 \in (0, 1)$ such that for every $\epsilon>0$, we have
$$
\sum_{\Norm(\mm)\leq M}\sum_{\Norm(\nn)\leq N}\alpha_{\mm}\beta_{\nn}a(\chi)_{\mm\nn}\ll_{\epsilon} (M+N)^{\theta_2}(MN)^{1-\theta_2+\epsilon}
$$
uniformly for all $M, N\geq 2$ and sequences of complex numbers $\{\alpha_{\mm}\}$ and $\{\beta_{\nn}\}$ satisfying $|\alpha_{\mm}|, |\beta_{\nn}|\leq 1$. One can take $\theta_2 = \frac{1}{24}$.
\end{prop}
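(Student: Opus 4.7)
The plan is to follow the standard Cauchy--Schwarz plus reciprocity / field-lowering strategy in the spirit of Vinogradov and \cite{FIMR, KM1}. Without loss of generality assume $N \geq M$. Applying Cauchy--Schwarz in the $\nn$ variable yields
\[
\left|\sum_{\mm,\nn} \alpha_\mm \beta_\nn a(\chi)_{\mm\nn}\right|^2 \leq N \sum_{\mm_1, \mm_2}\alpha_{\mm_1}\overline{\alpha_{\mm_2}}\sum_{\Norm(\nn) \leq N} a(\chi)_{\mm_1\nn}\overline{a(\chi)_{\mm_2\nn}},
\]
and after separating the diagonal $\mm_1 = \mm_2$ (of total contribution $O(MN^{1+\epsilon})$, within the target) it remains to bound the inner sum over $\nn$ uniformly in the off-diagonal pair $(\mm_1,\mm_2)$.

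For the off-diagonal analysis, I would fix generators $m_i \in \DD$ for $\mm_i$ and $n \in \DD$ for $\nn$ via Lemma \ref{fundDom}, and expand $a(\chi)_{\mm_i\nn} = [m_in]_\chi + [\ve m_i n]_\chi$, producing four cross terms of the shape $[m_a n]_\chi\overline{[m_b n]_\chi}$. The key identity --- a direct consequence of \eqref{bracket2} and the multiplicativity of the quartic residue symbol in both arguments --- is
\[
[w_1 w_2] = [w_1][w_2] \cdot \left(\frac{\sigma(w_1\tau(w_1))}{w_2}\right)_4 \left(\frac{\sigma(w_2\tau(w_2))}{w_1}\right)_4.
\]
Applying this to $[m_i n]$ for $i = 1, 2$ and collecting, the $n$-dependence in $[m_1 n]\overline{[m_2 n]}$ separates into one factor of the shape $(\alpha/n)_4$ with $\alpha$ depending only on $m_1, m_2$, and a product of symbols of the shape $(\sigma(n\tau(n))/m_i)_4$. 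Since $w\mapsto \sigma(w\tau(w))$ is multiplicative, Lemma \ref{tQR} implies that the combined $n$-dependence is a multiplicative character of $n$ whose modulus is polynomially bounded in $\Norm(\mm_1\mm_2)\leq M^2$.

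The critical analytic step --- and what distinguishes this paper from \cite{FIMR, KM1} --- is to descend this quartic character on $M$ to a quadratic character on a proper subfield. Using the field-lowering lemmas (Lemmas \ref{lAlg1}, \ref{lAlg2}, \ref{lAlg3}) together with the observation that both $w\tau(w)$ and $\sigma(w\tau(w))$ lie in $\Z[\sqrt{2}]$, the relevant quartic symbols reduce to quadratic residue symbols in $\QQT$ or $\Q(i)$. The inner sum over $\nn$ then becomes a quadratic character sum in a quadratic number field, to which a P\'olya--Vinogradov bound (applied over the fundamental domain $\DD$ with the archimedean bounds from Lemma \ref{fundDom}) yields power-saving cancellation whenever the induced character is non-principal.

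The main obstacle is controlling the off-diagonal pairs $(\mm_1,\mm_2)$ for which the induced character in $\nn$ is principal or has anomalously small conductor. This is precisely the point at which \cite{FIMR, KM1} were forced to assume a conjecture on short character sums. The key unconditional feature in the present paper is that, after field lowering, the relevant character sums live in a quadratic number field rather than in the quartic field $M$, so that the P\'olya--Vinogradov range already saturates the required sum length and no Burgess-type estimate (and hence no short-sum conjecture) is needed. It remains to verify combinatorially that the pairs producing a principal character form a thin set --- governed by an explicit algebraic condition on $m_1 m_2^{-1}$ in $\Z[\sqrt{2}]$ --- whose contribution is absorbed by a trivial count. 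Balancing these two contributions against the initial Cauchy--Schwarz loss yields the stated estimate with $\theta_2 = 1/24$.
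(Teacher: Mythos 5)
Your proposal stops short of the actual argument at two decisive points, and misattributes the source of the paper's unconditionality.

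First, your multiplicativity identity is the correct intermediate step but is not yet the formula the paper uses. Expanding $\sigma(w\tau(w)) = \sigma(w)\sigma\tau(w)$, the cross-term is a product of four quartic symbols, and the paper pairs them up using reciprocity (Lemma~\ref{tQR}): the pair $\bigl(\frac{\sigma(w)}{z}\bigr)_4\bigl(\frac{\sigma(z)}{w}\bigr)_4$ collapses to a unit times $\bigl(\frac{z}{\sigma(w)}\bigr)_2$, because after flipping the two factors are $\sigma$-conjugates and $\sigma$ fixes $i$, so they multiply into a square; the pair $\bigl(\frac{\sigma\tau(w)}{z}\bigr)_4\bigl(\frac{\sigma\tau(z)}{w}\bigr)_4$ collapses to a unit outright, because $\sigma\tau$ sends $i\mapsto -i$ and so conjugates the values of the quartic symbol. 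This yields \eqref{twistedmult}, $[wz]=\mu_3[w][z]\bigl(\frac{z}{\sigma(w)}\bigr)_2$, with a single \emph{quadratic} symbol in $M$ as the cross-term. That reduction is carried out entirely within $M$: the field-lowering Lemmas~\ref{lAlg1}--\ref{lAlg3} play no role in the proof of Proposition~\ref{typeIIprop} and enter only in the Type~I estimate of Proposition~\ref{typeIprop}.

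Second, and more seriously, a single Cauchy--Schwarz in $\nn$ is structurally insufficient. After your Cauchy--Schwarz the off-diagonal inner sum over $\nn$ carries a quadratic character of modulus $\sigma(w_1w_2)$, of norm $\approx M^2$, while the summation range has norm only $N$. P\'olya--Vinogradov then saves a factor of order $N/M$, which degenerates to nothing in the critical range $M\sim N$: there your bound is no better than the trivial $MN$ and cannot yield $\theta_2=1/24$. The paper instead applies H\"older's inequality with a high even power $2k$ in the $w$-variable, then Cauchy--Schwarz in $z$; this blows the $z$-range up to norm $N^{2k}$, far exceeding the conductor $\approx M^2$, so that the $z$-sum vanishes unless $(\sigma(w_1w_2))$ is a square ideal, a thin and easily counted condition. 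Optimizing at $k=6$ gives $\theta_2=1/24$. Without the H\"older step there is no uniform power saving over the whole range $M,N\geq 2$.

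Finally, the paper does use Burgess's bound, but in Proposition~\ref{typeIprop}: field lowering there reduces one quartic symbol to a real character over $\Z$ (via $\Q(i)$), so Burgess applies unconditionally, while the other quartic symbol is shown to trivialize. The Type~II estimate, as in \cite{FI1, FIMR, KM1}, is unconditional by the separate H\"older/complete-sum mechanism described above and has nothing to do with P\'olya--Vinogradov saturating a range.
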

Assuming Propositions \ref{typeIprop} and \ref{typeIIprop}, \cite[Proposition 5.2, p.\ 722]{FIMR} implies that there exists $\theta \in (0, 1)$ such that for every $\epsilon>0$, we have
$$
\sum_{\Norm(\nn)\leq X}a(\chi)_{\nn}\Lambda(\nn)\ll_{\epsilon} X^{1-\theta+\epsilon}
$$
uniformly for all $X\geq 2$. Moreover, we can take $\theta = 1/(49\cdot 64) = 1/3136$. By partial summation, it follows that, say,
\begin{equation}\label{psEst1}
\sum_{\Norm(\pp)\leq X}a(\chi)_{\pp}\ll X^{1-\frac{1}{3200}}.
\end{equation}
As
$$
\sum_{\substack{\Norm(\pp)\leq X \\ \pp\text{ lies over}\ p\not\equiv 1\bmod 8}} 1 \ll X^{\frac{1}{2}},
$$
Theorem~\ref{mainThm} follows from \eqref{psEst1} and Proposition~\ref{seqCrit}. It now remains to prove Propositions~\ref{typeIprop} and~\ref{typeIIprop}. 

\section{Proof of Proposition~\ref{typeIprop}}
\label{PROOFtypeIprop}
Let $\chi$ be a Dirichlet character modulo $8$. Let $\mathfrak{m}$ be an odd ideal of $\OO_M$. In view of Proposition \ref{seqCrit} we must bound the following sum
\[
A(x) = A(x; \chi, \mm) := \sum_{\substack{\Norm(\mathfrak{a}) \leq x \\ (\mathfrak{a}, 2) = 1,\ \mathfrak{m} \mid \mathfrak{a}}}\left([\alpha]_{\chi}+[\ve\alpha]_{\chi}\right),
\]
where $\alpha$ is chosen to be any generator of $\mathfrak{a}$. Our proof is based on the argument in \cite[Section 3, p.\ 12-19]{KM1}, which is in turn based on \cite[Section 6, p.\ 722-733]{FIMR}. Each non-zero ideal $\aaa$ has exactly $8$ generators $\alpha \in \DD$, where $\DD$ is the fundamental domain from Lemma~\ref{fundDom}. Set $u_1 = 1$ and $u_2 = \ve$. Set $F = 16$. Note that $\chi(\rat(\alpha\tau(\alpha)))$ depends only on the congruence class of $\alpha$ modulo~$8$. After splitting the above sum into congruence classes modulo $F$, and using \eqref{bracket2} and Lemma~\ref{tQR}, we find that
$$
A(x) = \frac{1}{8} \sum_{i = 1}^2 \sum_{\substack{\rho \bmod F\\ (\rho, F) = 1}} \mu(\rho, u_i)A(x; \rho, u_i),
$$
where $\mu(\rho, u_i)\in\{\pm 1, \pm i\}$ depends only on $\rho$ and $u_i$ and where
$$
A(x; \rho, u_i) := \sum_{\substack{\alpha \in u_i \DD,\ \Norm(\alpha) \leq x \\ \alpha \equiv \rho \bmod F \\ \alpha \equiv 0 \bmod \mathfrak{m}}} \left(\frac{\sigma(\alpha)}{\alpha}\right)_{M, 4} \left(\frac{\sigma\tau(\alpha)}{\alpha}\right)_{M, 4}.
$$
Our goal is to estimate $A(x; \rho, u_i)$ separately for each congruence class $\rho \bmod F$, $(\rho, F) = 1$ and unit $u_i$. We view $\OO_M$ as a $\Z$-module of rank $4$ and decompose it as $\OO_M = \Z \oplus \MM$, where $\MM = \Z\zeta_8\oplus\Z\zeta_8^2\oplus\Z\zeta_8^3$ is a free $\Z$-module of rank $3$. We can write $\alpha$ uniquely as
\[
\alpha = a + \beta, \text{ with } a \in \Z, \beta \in \MM,
\]
so that the summation conditions above are equivalent to
\begin{equation}\tag{$\ast$}
a + \beta \in u_i\DD,\quad \Norm(a + \beta) \leq x,\quad a + \beta \equiv \rho \bmod F,\quad a + \beta \equiv 0 \bmod \mathfrak{m}.
\end{equation}
We are now going to rewrite $(\sigma(\alpha)/\alpha)_{M, 4}$ and $(\sigma\tau(\alpha)/\alpha)_{M, 4}$ by using the same trick as in \cite[p.\ 725]{FIMR}. Put
$$
\sigma(\beta) - \beta = \eta^2 c_0 c \quad\text{ and }\quad\sigma\tau(\beta) - \beta = \eta'^2 c_0' c'
$$
with $c_0, c_0', c, c', \eta, \eta' \in \OO_M$, $c_0, c_0' \mid F$ squarefree, $\eta, \eta' \mid F^\infty$ and $(c, F) = (c', F) = 1$. By multiplying with an appropriate unit we can even ensure that $c \in \Z[i]$ and $c' \in \Z[\sqrt{-2}]$. Indeed, observe that
\begin{equation}\label{alphaprime}
\alpha' := \frac{\sigma(\alpha) - \alpha}{\zeta_8} = \frac{\sigma(\beta) - \beta}{\zeta_8} \in \Z[i],
\end{equation}
and we have a similar identity for $\sigma\tau(\beta) - \beta$. Then we obtain, just as in \cite[p.\ 14]{KM1}, by Lemma~\ref{tQR},
$$
\left(\frac{\sigma(\alpha)}{\alpha}\right)_{M, 4} = \mu_1 \cdot \left(\frac{a + \beta}{c\OO_M}\right)_{M, 4} \quad\text{ and }\quad\left(\frac{\sigma\tau(\alpha)}{\alpha}\right)_{M, 4} = \mu_2 \cdot \left(\frac{a + \beta}{c'\OO_M}\right)_{M, 4},
$$
where $\mu_1,\mu_2\in \{\pm 1, \pm i\}$ depend only on $\rho$ and $\beta$. Hence
$$
A(x; \rho, u_i) \leq \sum_{\beta \in \MM} |T(x; \beta, \rho, u_i)|,
$$
where
$$
T(x; \beta, \rho, u_i) := \sum_{\substack{a \in \Z \\ a+\beta\text{ sat. }(\ast)}} \left(\frac{a + \beta}{c\OO_M}\right)_{M, 4} \left(\frac{a + \beta}{c'\OO_M}\right)_{M, 4}.
$$
From now on we treat $\beta$ as fixed and estimate $T(x; \beta, \rho, u_i)$. It is here that we deviate from \cite{FIMR} and \cite{KM1}. Since we chose $c'\in\Z[\sqrt{-2}]$, we can factor the principal ideal $(c')\subset\Z[\sqrt{-2}]$ into prime ideals in $\Z[\sqrt{-2}]$ that do not ramify in $M$, say, $(c') = \prod_{i = 1}^k \pp_i^{e_i}$, so that
$$
\left(\frac{a + \beta}{c'\OO_M}\right)_{M, 4} = \prod_{i = 1}^k \left(\frac{a + \beta}{\pp_i \OO_M}\right)^{e_i}_{M, 4}.
$$
We claim that $((a + \beta)/\pp\OO_M)_{M, 4} = 1$ if $\pp \nmid a + \beta$. As a first step we can replace $\beta$ by some $\beta' \in \Z[\sqrt{-2}]$ due to Lemma~\ref{lAlg3}. Then Lemma~\ref{lAlg1} proves the claim if $\pp$ splits in $M$. Finally suppose that $\pp$ stays inert in $M$. If we define $p := \pp \cap \Z$, we find that $p \equiv 3 \bmod 8$. Hence Lemma~\ref{lAlg2} finishes the proof of the claim.

The factor $((a + \beta)/c\OO_M)_{M, 4}$ is handled more similarly to \cite[(6.21), p.\ 727]{FIMR}. Since we chose $c\in\Z[i]$, we factor $(c)\subset\Z[i]$ in $\Z[i]$ as $(c) = \mathfrak{g} \mathfrak{q}$, where $\mathfrak{g}$ is the product of all prime ideal powers $\pp^{e_{\pp}}\subset\Z[i]$ dividing $(c)$ such that the residue degree of $\pp$ is greater than one or such that $\pp$ is an unramified prime of degree one for which some conjugate of $\pp$ also divides $(c)$. By construction $q := N_{\Q(i)/\Q}(\mathfrak{q})$ is a squarefree odd integer and $g := N_{\Q(i)/\Q}(\mathfrak{g})$ is an odd squarefull number coprime with $q$.

Lemma~\ref{lAlg3} and the Chinese remainder theorem imply that there exists $\beta'\in\Z[i]$ such that $\beta\equiv \beta'\bmod \qq\OO_M$. Next, Lemma~\ref{lAlg1} and Lemma~\ref{lAlg2} imply that $((a+\beta')/\qq\OO_M)_{M, 4} = ((a+\beta')/\qq)_{\Q(i), 2}$. Finally, as $q$ is squarefree, the Chinese remainder theorem guarantees the existence of a rational integer~$b$ such that $\beta'\equiv b\bmod \qq$. Combining all of this gives
$$
\left(\frac{a + \beta}{c\OO_M}\right)_{M, 4} = \left(\frac{a + \beta}{\mathfrak{g}\OO_M}\right)_{M, 4} \left(\frac{a + b}{\mathfrak{q}}\right)_{\Q(i), 2}.
$$
Since $c$ depends on $\beta$ and not on $a$, we find that $b$ depends on $\beta$ and not on $a$. Now define $g_0$ as the radical of $g$, i.e., $
g_0 := \prod_{p \mid g} p$. We observe that the quartic residue symbol $(\alpha/\mathfrak{g}\OO_M)_{M, 4}$ is periodic in $\alpha$ modulo $\mathfrak{g}^\ast := \prod_{\pp \mid \mathfrak{g}} \pp$. But clearly $\mathfrak{g}^\ast$ divides $g_0$, and hence we conclude that $((a + \beta)/\mathfrak{g}\OO_M)_{M, 4}$ is periodic of period $g_0$ when viewed as a function of $a \in \Z$. So we split $T(x; \beta, \rho, u_i)$ into congruence classes modulo $g_0$, giving
$$
|T(x; \beta, \rho, u_i)| \leq \sum_{a_0 \bmod g_0} |T(x; \beta, \rho, u_i, a_0)|,
$$
where
$$
T(x; \beta, \rho, u_i, a_0) = \sum_{\substack{a \in \Z \\ a+\beta\text{ sat. }(\ast) \\ a \equiv a_0 \bmod g_0}} \left(\frac{a + b}{\mathfrak{q}}\right)_{\Q(i), 2} \left(\frac{a + \beta}{c'\OO_M}\right)_{M, 4}.
$$
We have already proven that $((a + \beta)/c'\OO_M)_{M, 4} = 1$ unless $\gcd(a + \beta, c') \neq (1)$ and in this case we have $((a + \beta)/c'\OO_M)_{M, 4} = 0$. An application of inclusion-exclusion gives
$$
|T(x; \beta, \rho, u_i, a_0)| \leq \sum_{\substack{\mathfrak{d} \mid c'\OO_M \\ \mathfrak{d} \text{ square-free}}} \left| T(x; \beta, \rho, u_i, a_0, \mathfrak{d}) \right|,
$$
where
\begin{equation}\label{eFin}
T(x; \beta, \rho, u_i, a_0, \mathfrak{d}) := \sum_{\substack{a \in \Z \\ a + \beta \text{ sat. }(\ast) \\ a \equiv a_0 \bmod g_0 \\ a + \beta \equiv 0 \bmod \mathfrak{d}}} \left(\frac{a + b}{\mathfrak{q}}\right)_{\Q(i), 2}.
\end{equation}
We unwrap the summation conditions above similarly as in \cite[p.\ 728]{FIMR}. Certainly $a + \beta \in u_i \DD$ implies that $a \ll x^{\frac{1}{4}}$, where the implied constant depends only on one of the two fixed units $u_i$. The condition $\Norm_{M/\Q}(a + \beta) \leq x$ is for fixed $\beta$ and $x$ a polynomial inequality of degree $4$ in $a$. Hence the summation variable $a \in \Z$ runs over at most $4$ intervals of length $\ll x^{1/4}$ with endpoints depending on $\beta$ and $x$.

Next, the congruence conditions $a + \beta \equiv \rho \bmod F$, $a + \beta \equiv 0 \bmod \mathfrak{m}$, $a \equiv a_0 \bmod g_0$ and $a + \beta \equiv 0 \bmod \mathfrak{d}$ imply that $a$ runs over some arithmetic progression of modulus $k$ dividing $g_0mdF$, where we define $m := \Norm_{M/\Q}(\mathfrak{m})$ and $d := \Norm_{M/\Q}(\mathfrak{d})$. Moreover, as $q = \Norm_{\Q(i)/\Q}(\mathfrak{q})$ is squarefree, $(\cdot/\mathfrak{q})_{\Q(i), 2}:\Z\rightarrow \{\pm 1, 0\}$ is the real primitive Dirichlet character of modulus $q$.

All in all, the sum in \eqref{eFin} can be rewritten as at most $4$ incomplete real character sums of length $\ll x^{\frac{1}{4}}$ and modulus $q \ll x^{\frac{1}{2}}$, each of which runs over an arithmetic progression of modulus $k$. When the modulus $q$ of the Dirichlet character divides the modulus $k$ of the arithmetic progression, one does not get the desired cancellation. So for now we assume that $q \nmid k$, and we will handle the case $q \mid k$ later. As has been explained in \cite[7., p.\ 924-925]{erratum}, Burgess's bound for short character sums \cite{Burgess} implies that for each integer $r\geq 2$, we have
$$
\displaystyle{T(x; \beta, \rho, u_i, a_0, \mathfrak{d}) \ll_{\epsilon, r} x^{\frac{1}{4}\left(1-\frac{1}{r}\right)}\cdot x^{\frac{1}{2}\left(\frac{r+1}{4r^2}+ \epsilon\right)}},
$$
so that on taking $r = 2$, we obtain
\begin{align}
\label{eBurgess}
T(x; \beta, \rho, u_i) \ll_{\epsilon} g_0x^{\frac{1}{4} - \frac{1}{32} + \epsilon}.
\end{align}
It remains to do the case $q \mid k$. Certainly, this implies $q \mid m d$. So (\ref{eBurgess}) holds if $q \nmid m d$. Recall that $(c) = \mathfrak{g} \mathfrak{q}$, hence we have (\ref{eBurgess}) unless
\begin{align}
\label{eExp}
p \mid \Norm_{\Q(i)/\Q}(\alpha') \implies p^2 \mid mdF\Norm_{\Q(i)/\Q}(\alpha'),
\end{align}
where $\alpha'$ is defined as in \eqref{alphaprime}. Define $A_\square(x; \rho, u_i)$ as the contribution to $A(x; \rho, u_i)$ with~\eqref{eExp}. Then we get
\[
A_\square(x; \rho, u_i) \leq |\{\alpha \in u_i \DD : N_{M/\Q}(\alpha) \leq x, \ p \mid \Norm_{\Q(i)/\Q}(\alpha') \implies p^2 \mid mdF\Norm_{\Q(i)/\Q}(\alpha')\}|.
\]
We decompose $\OO_M$ as $\OO_M = \Z[i] \oplus \MM'$, where $\MM' = \Z\zeta_8\oplus\Z\zeta_8^3 = \Z[i]\cdot\zeta_8$ is a free $\Z$-module of rank $2$. The linear map $\MM' \rightarrow \Z[i]$ given by $\alpha \mapsto \alpha'$ is injective. Now suppose $\alpha \in u_i \DD$ and $\Norm_{M/\Q}(\alpha) \leq x$. Then by Lemma~\ref{fundDom}, if we write $\alpha = a_1 + a_2i + (a_3 + a_4i)\zeta_8$, we have $a_j \ll x^{\frac{1}{4}}$ for $1\leq j\leq 4$. Hence the norm $\Norm_{\Q(i)/\Q}(\cdot)$ of $\alpha' = -2(a_3+a_4i)$ is $\ll x^{\frac{1}{2}}$, and so
$$
A_\square(x; \rho, u_i) \ll x^{\frac{1}{2}} |\{\alpha' \in \Z[i]: \Norm_{\Q(i)/\Q}(\alpha') \ll x^{\frac{1}{2}}, \ p \mid \Norm_{\Q(i)/\Q}(\alpha') \implies p^2 \mid mdF\Norm_{\Q(i)/\Q}(\alpha')\}|.
$$
Note that there are at most $b^{\epsilon}$ elements $\alpha' \in \Z[i]$ such that $\Norm_{\Q(i)/\Q}(\alpha') = b$. This gives
$$
A_\square(x; \rho, u_i) \ll x^{\frac{1}{2}+\epsilon} \sum_{\substack{b \ll x^{\frac{1}{2}}; \\ p \mid b \implies p^2 \mid mdFb}} 1,
$$
where $b$ runs over the positive rational integers. We assume that $m \leq x$ because otherwise $A(x)$ is the empty sum. This shows that $md \ll x^2$ and we conclude that
$$
A_\square(x; \rho, u_i) \ll_{\epsilon} x^{\frac{3}{4} + \epsilon}.
$$
Let $A_0(x; \rho, u_i)$ be the contribution to $A(x; \rho, u_i)$ of the terms $\alpha = a + \beta$ not satisfying (\ref{eExp}). Then we can split $A(x; \rho, u_i)$ as
$$
A(x; \rho, u_i) = A_\square(x; \rho, u_i) + A_0(x; \rho, u_i).
$$
To estimate $A_0(x; \rho, u_i)$ we can try to use our bound (\ref{eBurgess}) for every relevant $\beta$, but for this we need $g_0$ to be small. Hence we make the further partition
\[
A_0(x; \rho, u_i) = A_1(x; \rho, u_i) + A_2(x; \rho, u_i),
\]
where $\beta$ satisfies the additional constraint
\begin{align*}
g_0 \leq Z &\text{ in the sum } A_1(x; \rho, u_i), \\
g_0 > Z &\text{ in the sum } A_2(x; \rho, u_i).
\end{align*}
Here $Z$ is at our disposal, and we choose it later. We estimate $A_1(x; \rho, u_i)$ as in \cite{FIMR} by using (\ref{eBurgess}) and summing over $\beta \in \MM$ satisfying $|\beta^{(1)}|, \ldots, |\beta^{(4)}| \ll x^{\frac{1}{4}}$ to obtain
$$
A_1(x; \rho, u_i) \ll_{\epsilon} Zx^{1 - \frac{1}{32}+\epsilon}.
$$
To finish the proof of Proposition \ref{typeIprop} it remains to estimate $A_2(x; \rho, u_i)$. Note that $g_0 \leq \sqrt{g}$ and $g \leq \Norm_{\Q(i)/\Q}(c) \leq \Norm_{\Q(i)/\Q}(\alpha') \ll x^{\frac{1}{2}}$. Hence, similarly as for $A_\square(x; \rho, u_i)$, with $b = \Norm_{\Q(i)/\Q}(\alpha')$, we have
$$
A_2(x; \rho, u_i) \ll_{\epsilon} x^{\frac{1}{2}+\epsilon} \sum_{Z<g_0 \ll x^{\frac{1}{4}}}\sum_{\substack{b\ll x^{\frac{1}{2}} \\ g_0^2|b}}1 \ll_{\epsilon} Z^{-1}x^{1+\epsilon}.
$$
Picking $Z = x^{\frac{1}{64}}$ finishes the proof of Proposition \ref{typeIprop}.

\section{Proof of Proposition~\ref{typeIIprop}}\label{PROOFtypeIIprop}
Let $w$ and $z$ be odd elements in $\OO_M$. All quadratic and quartic residue symbols that follow are over $M$. By \eqref{bracket2}, we have
$$
[wz] = \left(\frac{\sigma(wz)\sigma\tau(wz)}{wz}\right)_4 = [w][z]\left(\frac{\sigma(w)}{z}\right)_4\left(\frac{\sigma\tau(w)}{z}\right)_4\left(\frac{\sigma(z)}{w}\right)_4\left(\frac{\sigma\tau(z)}{w}\right)_4.
$$
By Lemma~\ref{tQR}, we have, for some $\mu_1\in\{\pm 1, \pm i\}$ that depends only on the congruence classes of $w$ and $z$ modulo $16$,
$$
\left(\frac{\sigma(w)}{z}\right)_4\left(\frac{\sigma(z)}{w}\right)_4 = \mu_1\left(\frac{z}{\sigma(w)}\right)_4\left(\frac{\sigma(z)}{w}\right)_4 = \mu_1\left(\frac{z}{\sigma(w)}\right)_4\sigma\left(\frac{z}{\sigma(w)}\right)_4 = \mu_1\left(\frac{z}{\sigma(w)}\right)_2,
$$
because $\sigma(i) = i$. Similarly, for some $\mu_2\in\{\pm 1, \pm i\}$ that depends only on the congruence classes of $w$ and $z$ modulo $16$,
$$
\left(\frac{\sigma\tau(w)}{z}\right)_4\left(\frac{\sigma\tau(z)}{w}\right)_4 = \mu_2\left(\frac{z}{\sigma\tau(w)}\right)_4\sigma\tau\left(\frac{z}{\sigma\tau(w)}\right)_4 = \mu_2,
$$
because $\sigma\tau(i) = -i$. Hence we get, for $\mu_3 = \mu_1\mu_2$,
\begin{equation}\label{twistedmult}
[wz] = \mu_3 [w][z]\left(\frac{z}{\sigma(w)}\right)_2.
\end{equation}
This twisted multiplicativity formula for the symbol $[\cdot]$ is what makes the estimate in Proposition~\ref{typeIIprop} possible; it is analogous to \cite[Lemma 20.1, p.\ 1021]{FI1}, \cite[(3.8), p.\ 708]{FIMR}, \cite[Proposition 8, p.\ 31]{Milovic2}, and \cite[(4.1), p.\ 19]{KM1}. From here, the proof of Proposition~\ref{typeIIprop} is very similar to the proofs of \cite[Proposition 21.3, p.\ 1027]{FI1}, \cite[Proposition 7.1, p.\ 733]{FIMR}, \cite[Proposition 7, p.\ 29]{Milovic2}, and \cite[Proposition 2.6, p.\ 11]{KM1}. For the sake of completeness, we recall the main steps here. 

Let $\chi$ be a Dirichlet character modulo $8$, and let $\{a(\chi)_{\nn}\}_{\nn}$ be the sequence defined in \eqref{defachin}. Let $\{\alpha_{\mm}\}_{\mm}$ and $\{\beta_{\nn}\}_{\nn}$ be any two bounded sequences of complex numbers. Since each ideal of $\OO_M$ has $8$ different generators in $\DD$, we have
$$
\sum_{\Norm(\mm)\leq M}\sum_{\Norm(\nn)\leq N}\alpha_{\mm}\beta_{\nn}a(\chi)_{\mm\nn} = \frac{1}{8^2}\sum_{w\in\DD;\ \Norm(w)\leq M}\sum_{z\in\DD;\ \Norm(z)\leq N}\alpha_{w}\beta_{z}([wz]_{\chi}+[\ve wz ]_{\chi}).
$$
Here $\ve = 1+\sqrt{2}$, $\alpha_{w} := \alpha_{(w)}$ and $\beta_{z} := \beta_{(z)}$. Note that for any odd element $\alpha\in\OO_M$, we have $[\alpha]_{\chi} = \mu_4\cdot [\alpha]$ for some $\mu_4\in\{\pm 1, \pm i\}$ that depends only on the congruence class of $\alpha$ modulo $8$ (and so also modulo $16$). Also note that \eqref{twistedmult} implies that $[\ve wz] = \mu_5 [wz]$ for some $\mu_5\in\{\pm 1, \pm i\}$ that depends only on the congruence class of $wz$ modulo $16$. Hence, by restricting $w$ and $z$ to congruence classes modulo $16$, we may break up the sum above into $2\cdot 16^2$ sums of the shape
$$
\mu_6\sum_{\substack{w\in\DD;\ \Norm(w)\leq M \\ w\equiv \omega\bmod 16}}\sum_{\substack{z\in\DD;\ \Norm(z)\leq N \\ z\equiv \zeta\bmod 16}}\alpha_w\beta_z[wz],
$$
where $\mu_6\in\{\pm 1, \pm i\}$ depends only on the congruence classes $\omega$ and $\zeta$ modulo $16$. Again by \eqref{twistedmult}, we can replace $\alpha_w$ and $\beta_z$ by $\alpha_w[w]$ and $\beta_z[z]$ to arrive at the sum
$$
\mu_7\sum_{\substack{w\in\DD;\ \Norm(w)\leq M \\ w\equiv \omega\bmod 16}}\sum_{\substack{z\in\DD;\ \Norm(z)\leq N \\ z\equiv \zeta\bmod 16}}\alpha_w\beta_z\left(\frac{z}{\sigma(w)}\right)_2,
$$
where $\mu_7\in\{\pm 1, \pm i\}$ depends only on $\omega$ and $\zeta$. Hence proving Proposition~\ref{typeIIprop} reduces to proving the same estimate for sums of the type
\begin{equation}\label{BMN}
B(M, N; \omega, \zeta) := \sum_{\substack{w\in\DD;\ \Norm(w)\leq M \\ w\equiv \omega\bmod 16}}\sum_{\substack{z\in\DD;\ \Norm(z)\leq N \\ z\equiv \zeta\bmod 16}}\alpha_w\beta_z\left(\frac{z}{\sigma(w)}\right)_2.
\end{equation}
We will prove that
\begin{equation}\label{BMNbound}
B(M, N; \omega, \zeta) \ll_{\epsilon}M^{\frac{23}{24}}N(MN)^{\epsilon}
\end{equation}
whenever $N\geq M$; Proposition~\ref{typeIIprop} then immediately follows from the symmetry of the sum $B(M, N; \omega, \zeta)$ coming from quadratic reciprocity. So suppose that $N\geq M$. Similarly as in \cite{FI1, FIMR}, we fix an integer $k \geq 4$, and we apply H\"{o}lder's inequality to the $w$ variable to get
$$
|B(M, N; \omega, \zeta)|^{2k\cdot 2} \ll M^{(2k-1)\cdot 2}\left(\sum_{w}\left|\sum_{z}\beta_z\left(\frac{z}{\sigma(w)}\right)_2\right|^{2k}\right)^2,
$$
where the summations over $w$ and $z$ are as above in \eqref{BMN}. After expanding the inner sum in the second factor above, we get
$$
|B(M, N; \omega, \zeta)|^{4k} \ll M^{4k-2} \left( \sum_{w}\sum_{z}\beta_z' \left(\frac{z}{\sigma(w)}\right)_2\right)^2,
$$
where
$$
\beta_z' = \sum_{\substack{z = z_1\cdots z_{2k} \\ z_1,\ldots, z_{2k}\in \DD \\ \Norm(z_1),\ldots, \Norm(z_{2k})\leq N \\ z_1\equiv\cdots\equiv z_{2k}\equiv \zeta\bmod 16 }}\beta_{z_1}\overline{\beta_{z_2}}\cdots \beta_{z_{2k-1}}\overline{\beta_{z_{2k}}},
$$
and again the summation conditions for $w$ are as in \eqref{BMN}. Applying the Cauchy-Schwarz inequality to the $z$-variable above, we get
$$
|B(M, N; \omega, \zeta)|^{4k} \ll_{\epsilon, k} M^{4k-2} N^{2k+\epsilon} \sum_{w_1}\sum_{w_2}\sum_{z}\left(\frac{z}{\sigma(w_1w_2)}\right)_2,
$$
where the summation conditions for $w_1$ and $w_2$ are as those for $w$ in \eqref{BMN}, while the sum over $z$ is implicitly over those $z = z_1+z_2\zeta_8+z_3\zeta_8^2+z_4\zeta_8^3\in\Z[\zeta_8]$ with $|z_j|\ll N^{\frac{2k}{4}}$ for $1\leq j\leq 4$, the implied constant being absolute. We break up the sum over $z$ into congruence classes $\xi$ modulo $\Norm(w_1w_2)$ and note that 
$$
\sum_{\xi\bmod \sigma(w_1w_2)}\left(\frac{\xi}{\sigma(w_1w_2)}\right)_2 = 0
$$ 
unless $(\sigma(w_1w_2))$ is the square of an ideal in $\OO_M$, and in particular, unless $\Norm(w_1w_2)$ is a square. This gives
$$
\sum_{z}\left(\frac{z}{\sigma(w_1w_2)}\right)_2\ll
\begin{cases}
N^{2k} & \text{if }\Norm(w_1w_2)=\square \\
M^2N^{\frac{3k}{2}} + M^4N^k + M^6N^{\frac{k}{2}} + M^8 & \text{otherwise.}
\end{cases}
$$
Since we took $k\geq 4$ and since $N\geq M$, we have $N^{\frac{k}{2}}\geq M^2$, so the last bound can be simplified to $M^2N^{\frac{3k}{2}}$. Hence
$$
\begin{array}{rcl}
|B(M, N; \omega, \zeta)|^{4k} & \ll_{\epsilon, k} & M^{4k-2} N^{2k} \left(M\cdot N^{2k} + M^2\cdot M^2N^{\frac{3k}{2}}\right)(MN)^{\epsilon} \\
& \ll_{\epsilon, k} & \left(M^{4k-1} N^{4k} + M^{4k+2}N^{\frac{7k}{2}}\right)(MN)^{\epsilon}.
\end{array}
$$
Taking $k = 6$, we get
$$
|B(M, N; \omega, \zeta)| \ll_{\epsilon} \left(M^{\frac{23}{24}} N + M^{\frac{13}{12}}N^{\frac{7}{8}}\right)(MN)^{\epsilon}.
$$
Since $N\geq M$, we have $M^{\frac{13}{12}}N^{\frac{7}{8}} \leq M^{\frac{23}{24}} N$, and this finishes the proof of \eqref{BMNbound} and hence also of Proposition~\ref{typeIIprop}.
 
\bibliographystyle{plain}
\bibliography{Koymans_Milovic_2_References}

\begin{thebibliography}{10}

\bibitem{Burgess}
D.~A. Burgess.
\newblock On character sums and {$L$}-series. {II}.
\newblock {\em Proc. London Math. Soc. (3)}, 13:524--536, 1963.

\bibitem{CohnLag}
H.~Cohn and J.~C. Lagarias.
\newblock On the existence of fields governing the {$2$}-invariants of the
  classgroup of {${\bf Q}(\sqrt{dp})$} as {$p$} varies.
\newblock {\em Math. Comp.}, 41(164):711--730, 1983.

\bibitem{CohnLag2}
H.~Cohn and J.~C. Lagarias.
\newblock Is there a density for the set of primes {$p$} such that the class
  number of {${\bf Q}(\sqrt{-p})$} is divisible by {$16$}?
\newblock In {\em Topics in classical number theory, {V}ol. {I}, {II}
  ({B}udapest, 1981)}, volume~34 of {\em Colloq. Math. Soc. J\'anos Bolyai},
  pages 257--280. North-Holland, Amsterdam, 1984.

\bibitem{LVP}
C.-J. de~la Vall\'{e}e~Poussin.
\newblock {\em M\'{e}m. Couronn\'{e}s Acad. Roy. Belgique}, 59:1--74, 1899.

\bibitem{FK1}
{\'E}.~Fouvry and J.~Kl{\"u}ners.
\newblock Cohen-{L}enstra heuristics of quadratic number fields.
\newblock In {\em Algorithmic number theory}, volume 4076 of {\em Lecture Notes
  in Comput. Sci.}, pages 40--55. Springer, Berlin, 2006.

\bibitem{FK2}
{\'E}.~Fouvry and J.~Kl{\"u}ners.
\newblock On the 4-rank of class groups of quadratic number fields.
\newblock {\em Invent. Math.}, 167(3):455--513, 2007.

\bibitem{FK3}
{\'E}.~Fouvry and J.~Kl{\"u}ners.
\newblock On the negative {P}ell equation.
\newblock {\em Ann. of Math. (2)}, 172(3):2035--2104, 2010.

\bibitem{FK4}
{\'E}.~Fouvry and J.~Kl{\"u}ners.
\newblock The parity of the period of the continued fraction of {$\sqrt d$}.
\newblock {\em Proc. Lond. Math. Soc. (3)}, 101(2):337--391, 2010.

\bibitem{FI1}
J.~B. Friedlander and H.~Iwaniec.
\newblock The polynomial {$X^2+Y^4$} captures its primes.
\newblock {\em Ann. of Math. (2)}, 148(3):945--1040, 1998.

\bibitem{FIMR}
J.~B. Friedlander, H.~Iwaniec, B.~Mazur, and K.~Rubin.
\newblock The spin of prime ideals.
\newblock {\em Invent. Math.}, 193(3):697--749, 2013.

\bibitem{erratum}
J.~B. Friedlander, H.~Iwaniec, B.~Mazur, and K.~Rubin.
\newblock Erratum to: {T}he spin of prime ideals [ {MR}3091978].
\newblock {\em Invent. Math.}, 202(2):923--925, 2015.

\bibitem{Gauss}
C.~F. Gauss.
\newblock {\em Disquisitiones arithmeticae}.
\newblock Springer-Verlag, New York, 1986.
\newblock Translated and with a preface by Arthur A. Clarke, Revised by William
  C. Waterhouse, Cornelius Greither and A. W. Grootendorst and with a preface
  by Waterhouse.

\bibitem{KM1}
P.~{Koymans} and D.~{Milovic}.
\newblock {On the $16$-rank of class groups of $\mathbb{Q}(\sqrt{-p})$}.
\newblock {\em ArXiv e-prints}, November 2016.

\bibitem{Lemmermeyer}
F.~Lemmermeyer.
\newblock {\em Reciprocity laws}.
\newblock Springer Monographs in Mathematics. Springer-Verlag, Berlin, 2000.
\newblock From Euler to Eisenstein.

\bibitem{LW82}
P.~A. Leonard and K.~S. Williams.
\newblock On the divisibility of the class numbers of {$Q(\sqrt{-p})$} and
  {$Q(\sqrt{-2p})$} by {$16$}.
\newblock {\em Canad. Math. Bull.}, 25(2):200--206, 1982.

\bibitem{Marcus}
D.~A. Marcus.
\newblock {\em Number fields}.
\newblock Springer-Verlag, New York-Heidelberg, 1977.
\newblock Universitext.

\bibitem{Milovic2}
D.~{Milovic}.
\newblock {On the $16$-rank of class groups of $\mathbb{Q}(\sqrt{-8p})$ for
  $p\equiv -1\bmod 4$}.
\newblock {\em ArXiv e-prints}, November 2015.

\bibitem{Morton}
P.~Morton.
\newblock Density result for the {$2$}-classgroups of imaginary quadratic
  fields.
\newblock {\em J. Reine Angew. Math.}, 332:156--187, 1982.

\bibitem{Redei}
L.~R{\'e}dei.
\newblock Arithmetischer {B}eweis des {S}atzes \"uber die {A}nzahl der durch
  vier teilbaren {I}nvarianten der absoluten {K}lassengruppe im quadratischen
  {Z}ahlk\"orper.
\newblock {\em J. Reine Angew. Math.}, 171:55--60, 1934.

\bibitem{Smith1}
A.~{Smith}.
\newblock {Governing fields and statistics for 4-Selmer groups and 8-class
  groups}.
\newblock {\em ArXiv e-prints}, July 2016.

\bibitem{Smith2}
A.~{Smith}.
\newblock {$2^\infty$-Selmer groups, $2^\infty$-class groups, and Goldfeld's
  conjecture}.
\newblock {\em ArXiv e-prints}, February 2017.

\bibitem{Ste1}
P.~Stevenhagen.
\newblock Ray class groups and governing fields.
\newblock In {\em Th\'eorie des nombres, {A}nn\'ee 1988/89, {F}asc.\ 1}, Publ.
  Math. Fac. Sci. Besan\c con, page~93. Univ. Franche-Comt\'e, Besan\c con,
  1989.

\bibitem{Ste2}
P.~Stevenhagen.
\newblock Divisibility by {$2$}-powers of certain quadratic class numbers.
\newblock {\em J. Number Theory}, 43(1):1--19, 1993.

\bibitem{Vaughan}
R.-C. Vaughan.
\newblock Sommes trigonom\'etriques sur les nombres premiers.
\newblock {\em C. R. Acad. Sci. Paris S\'er. A-B}, 285(16):A981--A983, 1977.

\bibitem{Vino1}
I.~M. Vinogradov.
\newblock The method of trigonometrical sums in the theory of numbers.
\newblock {\em Trav. Inst. Math. Stekloff}, 23:109, 1947.

\bibitem{Vino2}
I.~M. Vinogradov.
\newblock {\em The method of trigonometrical sums in the theory of numbers}.
\newblock Dover Publications, Inc., Mineola, NY, 2004.
\newblock Translated from the Russian, revised and annotated by K. F. Roth and
  Anne Davenport, Reprint of the 1954 translation.

\end{thebibliography}
\end{document}